\DeclareMathOperator{\sgn}{sgn}
\DeclareMathOperator{\arccot}{arccot}
\newcommand{\M}{\mathbf M}
\newtheorem{teo}{Theorem}[section]
\newtheorem{lema}[teo]{Lemma}
\begin{document}

\title{\LARGE A Pseudospectral Method for the One-Dimensional Fractional Laplacian on $\mathbb R$}

\author{\normalsize Jorge Cayama$^1$ \and \normalsize Carlota M. Cuesta$^1$ \and \normalsize Francisco de la Hoz$^2$}

\date{
	\footnotesize
	$^1$Department of Mathematics, Faculty of Science and Technology, University of the Basque Country UPV/EHU, Barrio Sarriena S/N, 48940 Leioa, Spain \\[1em]
	$^2$Department of Applied Mathematics and Statistics and Operations Research, Faculty of Science and Technology, University of the Basque Country UPV/EHU, Barrio~Sarriena~S/N, 48940 Leioa, Spain
}

\maketitle

\begin{abstract}
	
In this paper, we propose a novel pseudospectral method to approximate accurately and efficiently the fractional Laplacian without using truncation. More precisely, given a bounded regular function defined over $\mathbb R$, we map the unbounded domain into a finite one, then we represent the function as a trigonometrical series. Therefore, the central point of this paper is the computation of the fractional Laplacian of an elementary trigonometric function.
	
As an application of the method, we also do the simulation of Fisher's equation with fractional Laplacian in the monostable case.
	
\end{abstract}

\medskip

\noindent\textit{Keywords:}

\noindent Fractional Laplacian, Pseudospectral methods, rational Chebyshev functions, nonlocal Fisher's equation, accelerating fronts
	
\section{Introduction}

In this  paper we present a pseudospectral method that approximates the one-dimensional fractional Laplacian operator of smooth functions on $\mathbb{R}$, by mapping $\mathbb{R}$ to a finite interval, and, thus, avoiding truncation.

The fractional Laplacian operator, denoted as $(-\Delta)^{\alpha/2}$, is a generalization of the integer-order Laplacian $\Delta \equiv \partial^2/\partial x_1^2 + \ldots \partial^2/\partial x_d^2$, with $d$ being the dimension. It appears in a number of applications (see, for instance, \cite[Table 1]{lischke} and its references), and can be defined in different equivalent ways \cite{kwasnicki}. In our case, we consider the following definition on $\mathbb R$:
\begin{equation}
\label{fraclapl}
(-\Delta)^{\alpha/2}u(x) = c_\alpha\int_{-\infty}^\infty\frac{u(x)-u(x+y)}{|y|^{1+\alpha}}dy,
\end{equation}

\noindent where $\alpha\in(0,2)$, and
$$
c_\alpha = \alpha\frac{2^{\alpha-1}\Gamma(1/2+\alpha/2)}{\sqrt{\pi}\Gamma(1-\alpha/2)}.
$$

\noindent The equivalent definition in the Fourier side is
$$
[(-\Delta)^{\alpha}u]^\wedge(\xi) = |\xi|^\alpha \hat u(\xi),
$$

\noindent and, hence, when $\alpha=2$, we recover $-\Delta u(x) = -u_{xx}(x)$, whereas, when $\alpha=0$, $(-\Delta)^0u = u$. On the other hand, bearing in mind that the Hilbert transform \cite{hilbert}
$$
\mathcal H (u)(x) = \frac1\pi\int_{-\infty}^{\infty}\frac{u(y)}{x-y}dy
$$

\noindent is defined in the Fourier side as $[\mathcal H(u)]^\wedge(\xi) = -i\sgn(\xi)\hat u(\xi)$, it follows that, when $\alpha=1$, $[(-\Delta)^{1/2}u]^\wedge(\xi) = |\xi|\hat u(\xi) = (-i\sgn(\xi))(i\xi)\hat u(\xi)$, or, equivalently, $(-\Delta)^{1/2}u(x) = \mathcal H(u_x)(x)$.

Remark that, in \cite{kwasnicki}, the author indeed considers $-(-\Delta)^{\alpha/2}u(x)$ in the definition of the fractional Laplacian, to make it agree with the integer-order Laplacian, when $\alpha = 2$. The same sign convention is chosen in \cite{pozrikidis}, where an excellent and up-to-date introduction to the topic can be found.

In recent years, there has been an increasing interest in evolution equations that incorporate nonlocal operators and, in particular, nonlocal operators that resemble a fractional power of the Laplacian or derivatives of fractional order. There are many models where such operators appear, and there is also an intrinsic mathematical interest in analyzing and simulating such equations. The list of references is so vast that we will concentrate here in the case where the fractional Laplacian appears instead of the usual term of Brownian diffusion, and we will focus on the paradigm nonlinear equation for this diffusion type, namely, Fisher's equation with fractional Laplacian:
\begin{equation}
\partial_{t}u +(-\Delta)^{\alpha/2}u = f(u),\quad x\in\mathbb{R},\ \ t \geq 0,
\label{fishereq}
\end{equation}

\noindent where, generically, $f(u) = u(1 - u)$ is the so-called monostable nonlinearity, or $f(u)=u(1-u)(u-a)$, with $a\in(0,1)$, is the bistable nonlinearity. In the case of classical diffusion, with $\alpha = 2$, this is a paradigm equation for pattern forming systems and reaction-diffusion systems in general (see the classical references for the monostable case, \cite{kpp}, \cite{fisher}, \cite{wazwaz}, \cite{danilov}, \cite{polyanin}, etc., and for the bistable case, \cite{starmer}, \cite{keener}, \cite{fitzhugh}, \cite{nagumo2}, \cite{schlogl}, etc.). The nonlocal version (\ref{fishereq}) has been proposed as a reaction-diffusion system with anomalous diffusion (see \cite{MancinelliVergniVulpiani2002}, \cite{MancinelliVergniVulpiani2003} and \cite{BaeumerKovacsMeerschaert2008}). Some fundamental analytical results appear in \cite{CabreSire2014} and \cite{CabreSire2015} for more general nonlinear equations and in several dimensions. Our main interest here is to simulate (\ref{fishereq}) as an illustration of a problem that requires a very large spatial domain or the whole domain, when traveling wave 
solutions ensue, since these travel in one direction and they do so with a wave speed exponentially increasing in time in the monostable case (see \cite{Engler2010}, \cite{CabreRoquejoffre2009} and \cite{CabreRoquejoffre2013}). In this regard, we will contrast the numerical results with 
the analytical ones.
 
The structure of this paper is as follows. In Section \ref{s:ComputationFracLap}, we propose a novel method to compute accurately the fractional Laplacian \eqref{fraclapl} without using truncation. More precisely, we rewrite the Laplacian in a more suitable way, which requires at least $\mathcal C^2$ regularity; then, after mapping the original domain $\mathbb R$ to $[0,\pi]$ by using the change of variable $x = L\cos(s)$, with $L > 0$, $s\in[0,\pi]$, we expand $u(s)\equiv u(L\cos(s))$ in Fourier series, and, at its turn, obtain the Fourier series expansion of $(-\Delta)^{\alpha/2}(e^{iks})$, which constitutes the central part of this paper. We also show how to generate efficiently an operational matrix $\M_\alpha$ that can be applied to the coefficients of the Fourier expansion of $u(s)$, to approximate $(-\Delta)^{\alpha/2}(u(s))$ at the equally-spaced nodes
\begin{equation}
\label{e:nodessj}
s_j = \frac{\pi(2j+1)}{2N}, \quad 0\le j\le N-1.
\end{equation}
Later on, in Section \ref{s:numtests}, we test the proposed method for a couple of functions. Finally, in Section \ref{s:fisher} we apply our method to the numerical simulation of (\ref{fishereq}) in the monostable case.

To the best of our knowledge, the numerical computation of the fractional Laplacian without truncating the domain has not being done so far. However, the change of variable $x = L\cos(s)$ was applied successfully in \cite{delahozcuesta2016} to compute numerically a related nonlocal operator defined on the whole real line. More precisely, in \cite{delahozcuesta2016}, from which we get several useful ideas, $\partial_x\mathcal D^\alpha$ was considered on $\mathbb R$, where the operator $\mathcal D^\alpha$ can be regarded as a left-sided fractional derivative in the Caputo sense (see, for instance, \cite{Kilbas}), with integration taken from $-\infty$:
\begin{equation}
\label{e:Da}
\mathcal D^\alpha u(x) = \frac{1}{\Gamma(1 - \alpha)}\int_{-\infty}^x\frac{u_x(y)}{(x-y)^\alpha}dy, \quad\alpha\in(0, 1).
\end{equation}

\noindent After defining $u(s)\equiv u(L\cos(s))$, $\partial_x\mathcal D^\alpha(u(s))$ was approximated at the nodes $s_j$ in \eqref{e:nodessj} by the composite midpoint rule taken over the families of nodes
\begin{equation*}
s_l^{(m)} = \frac{\pi(2l+1)}{2^{m+1}N}, \quad 0 \le l \le 2^{m}N-1, \quad m = 1, 2, \ldots,
\end{equation*}
although, in practice, only the indices $l$ satisfying $2^{m-1}(2j+1) \le l \le 2^{m}N-1$ were used, denoting as $[\partial_x\mathcal D^\alpha]^{m}(u(s))$ the resulting approximation. Then, studying the errors of several functions with different types of decay and applying Richardson extrapolation \cite{richardson} to $[\partial_x\mathcal D^\alpha]^{m}(u(s))$, it was conjectured that
\begin{equation}
\label{e:assymptfracder}
\begin{split}
\|[\partial_x\mathcal D^\alpha]^{m} u(x) & - \partial_x\mathcal D^\alpha u(x)\|_{\infty}
\cr
& =
\frac{c_1(\alpha)}{m^{2 - \alpha}} + \frac{c_2(\alpha)}{m^{3 - \alpha}} + \frac{c_3(\alpha)}{m^{4 - \alpha}} + \frac{c_4(\alpha)}{m^{5 - \alpha}} + \frac{c_5(\alpha)}{m^{6 - \alpha}} + \ldots,
\end{split}
\end{equation}
 and, indeed, this formula yielded very accurate results, at least for the functions considered. Remark that, in practice, $u(s)$ was expanded in Fourier series, so the extrapolation was really applied over $[\partial_x\mathcal D^\alpha]^{m}(e^{iks})$, which enabled to create an operational matrix acting on the coefficients of the Fourier expansion of $u(s)$.

As we can see, the main difference between this paper and \cite{delahozcuesta2016} is the numerical computation of the corresponding nonlocal operator acting on a single Fourier mode $e^{iks}$. In this paper, we have not considered the extrapolation technique, because it appears to be more involved than in \cite{delahozcuesta2016}, and, on the other hand, the method that we are proposing here is, in our opinion, very accurate.

To the best of our knowledge, the use of spectral and pseudospectral methods for approximating the Fractional Laplacian is limited to a few instances in the literature: we remark the works \cite{IlicLiuTurnerAnh2005}, \cite{YangLiuTurner2010}, and \cite{Bueno-OrovioKayBurrage2012}, where, although a truncation of the domain is not explicitly given, the method relays on the approximation of the fractional Laplacian by an operator on a truncated domain.

\section{Computation of the fractional Laplacian for regular functions}

\label{s:ComputationFracLap}

In the following pages, we will develop a new method to approximate numerically \eqref{fraclapl}. However, instead of working directly with \eqref{fraclapl}, we will use the representation given by the following lemma, which requires boundedness and $\mathcal C^2$-regularity.

\subsection{Equivalent form of the fractional Laplacian for regular functions}

\begin{lema}
	Consider the twice continuous bounded function $u\in\mathcal C_b^2(\mathbb{R})$. If $\alpha\in [1,2)$, or $\alpha\in(0,1)$ and $\lim_{x\to\pm \infty}u_x(x)=0$, then
	\begin{equation}
	(-\Delta)^{\alpha/2}u(x)=\left\lbrace
	\begin{aligned}
	& \frac{1}{\pi}\int_{-\infty}^\infty \frac{u_{x}(y)}{x-y}dy, & \alpha = 1, \\
	& \frac{c_{\alpha}}{\alpha(1-\alpha)}\int_{-\infty}^\infty \frac{u_{xx}(y)}{|x-y|^{\alpha-1}}dy, & \alpha \neq 1.
	\end{aligned}
	\right.\label{fraclapl2}
	\end{equation}
\end{lema}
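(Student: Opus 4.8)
The plan is to reduce the singular integral \eqref{fraclapl} to an absolutely convergent one by symmetrizing, and then to integrate by parts twice, transferring the two derivatives that appear in \eqref{fraclapl2} from the kernel onto $u$. First I would exploit the evenness of $|y|^{1+\alpha}$ to replace the first difference $u(x)-u(x+y)$ by the second central difference: averaging the integrand with its reflection $y\mapsto -y$ gives
$$(-\Delta)^{\alpha/2}u(x) = c_\alpha\int_0^\infty\frac{2u(x)-u(x+y)-u(x-y)}{y^{1+\alpha}}\,dy.$$
Writing $g(y) = 2u(x)-u(x+y)-u(x-y)$, Taylor expansion gives $g(0)=g'(0)=0$ and $g''(0)=-2u_{xx}(x)$, so $g(y)=O(y^2)$ near the origin and the integrand is $O(y^{1-\alpha})$ there; since $u$ is bounded, $g$ stays bounded at infinity. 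Hence the integral converges absolutely for every $\alpha\in(0,2)$: this symmetrization cancels the odd first-order term and removes the need for a principal value in \eqref{fraclapl}.

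Next I would integrate by parts once, using $y^{-1-\alpha} = \frac1\alpha\frac{d}{dy}(-y^{-\alpha})$, to obtain $\frac{c_\alpha}{\alpha}\int_0^\infty g'(y)y^{-\alpha}\,dy$. The boundary terms $g(y)/y^\alpha$ vanish at both ends: at $0$ because $g=O(y^2)$ and $2-\alpha>0$, and at $\infty$ because $g$ is bounded and $\alpha>0$. At this point the case $\alpha=1$ is already finished: since $c_1=1/\pi$, undoing the substitutions $t=x\pm y$ turns $\int_0^\infty g'(y)y^{-1}\,dy$ into the principal-value Hilbert-transform integral $\frac1\pi\int_{-\infty}^\infty u_x(t)/(x-t)\,dt$, which is the first line of \eqref{fraclapl2}. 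For $\alpha\neq1$ I would integrate by parts a second time with $y^{-\alpha}=\frac{1}{1-\alpha}\frac{d}{dy}y^{1-\alpha}$, producing $\frac{-c_\alpha}{\alpha(1-\alpha)}\int_0^\infty g''(y)y^{1-\alpha}\,dy$; finally, substituting $g''(y)=-u_{xx}(x+y)-u_{xx}(x-y)$ and folding the two half-lines back together recovers $\frac{c_\alpha}{\alpha(1-\alpha)}\int_{-\infty}^\infty u_{xx}(t)/|x-t|^{\alpha-1}\,dt$, the second line, with the stated constant falling out automatically.

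The main obstacle is controlling the boundary term $g'(y)\,y^{1-\alpha}/(1-\alpha)$ at infinity in this second integration by parts. At the origin it is harmless, since $g'=O(y)$ makes it $O(y^{2-\alpha})\to0$. At infinity, when $\alpha\in(1,2)$ the weight $y^{1-\alpha}\to0$ while $g'$ remains bounded (here boundedness of $u_x$ suffices), so the term drops out; but when $\alpha\in(0,1)$ the weight $y^{1-\alpha}$ grows, and one must invoke the hypothesis $\lim_{x\to\pm\infty}u_x(x)=0$—which forces $g'(y)=u_x(x-y)-u_x(x+y)\to0$—to kill the boundary contribution and to secure convergence of the resulting integral against the growing kernel $|x-t|^{1-\alpha}$. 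This is precisely the point at which the two hypotheses of the lemma split into the two stated regimes, and verifying it carefully, together with the elementary but slightly delicate change-of-variable bookkeeping in the $\alpha=1$ case, is where the real work lies.
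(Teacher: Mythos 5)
Your overall strategy coincides with the paper's: symmetrize the defining integral onto $[0,\infty)$, then move the derivatives from the kernel onto $u$ one at a time, splitting into the cases $\alpha=1$, $\alpha\in(1,2)$ and $\alpha\in(0,1)$. Your first integration by parts produces exactly the paper's intermediate identity
\begin{equation*}
(-\Delta)^{\alpha/2}u(x)=\frac{c_\alpha}{\alpha}\int_0^\infty\frac{u_x(x-z)-u_x(x+z)}{z^\alpha}\,dz
\end{equation*}
(the paper obtains it by writing the second difference as a double integral and exchanging the order of integration, which is the same manipulation), and your treatment of $\alpha=1$ and of $\alpha\in(1,2)$ agrees with the paper's proof and is correct.

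The genuine gap is in the case $\alpha\in(0,1)$, precisely the step you single out as the crux. You assert that the hypothesis $u_x(x)\to0$ as $x\to\pm\infty$, i.e.\ $g'(y)\to0$, kills the boundary term $g'(y)\,y^{1-\alpha}/(1-\alpha)$ at infinity. That inference is invalid: the product is a $0\cdot\infty$ indeterminate form, and the hypothesis is purely qualitative, giving no decay rate. Concretely, take $u$ with $u_x$ supported on bumps centered at $t_k=2^k$, of height $t_k^{\alpha-1}$ and width $t_k^{(\alpha-1)/2}$, and $u_x\equiv0$ elsewhere: then $u\in\mathcal C_b^2(\mathbb R)$ (the bump areas $t_k^{3(\alpha-1)/2}$ are summable, and $u_{xx}=O(1)$) and $u_x\to0$ at $\pm\infty$, yet $g'(y)\,y^{1-\alpha}$ is of order $1$ along $y=t_k-x$, so the boundary term does not tend to zero and your second integration by parts breaks down. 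This is exactly why the paper's proof treats $\alpha\in(0,1)$ by a different route: it uses the hypothesis only through the identity $u_x(x-z)-u_x(x+z)=\int_z^\infty\bigl(u_{xx}(x-y)+u_{xx}(x+y)\bigr)dy$ (fundamental theorem of calculus, valid precisely because $u_x\to0$), and then exchanges the order of the two integrals, so that no boundary term at infinity is ever formed. To repair your argument you should do the same in that regime; the order exchange does carry a tacit integrability assumption of its own, but it avoids the indeterminate product on which your version founders.
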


\begin{proof}

Let us express first (\ref{fraclapl}) as an integral over $[0, \ \infty)$:

\begin{align}
	(-\Delta)^{\alpha/2}u(x) & = c_{\alpha}\int_{0}^\infty \frac{u(x)-u(x-y)+u(x)-u(x+y)}{y^{1+\alpha}}dy
	\cr
	& = c_{\alpha}\int_{0}^\infty \int_{0}^{y}\frac{u_{x}(x-z)-u_{x}(x+z)}{y^{1+\alpha}}dz\ dy
	\cr
	& = c_{\alpha}\int_{0}^\infty \left[(u_{x}(x-z)-u_{x}(x+z))\int_{z}^\infty 
	\frac{1}{y^{1+\alpha}}dy\right]dz
	\cr
	& = \frac{c_{\alpha}}{\alpha}\int_{0}^\infty \frac{u_{x}(x-z)-u_{x}(x+z)}{z^{\alpha}}dz,
	\label{intlema1}
\end{align}

\noindent where we have changed the order of integration. We distinguish three cases. When $\alpha = 1$, $c_1 = 1/\pi$, so
\begin{align*}
(-\Delta)^{1/2}u(x) & = \frac{1}{\pi}\int_{0}^\infty \frac{u_{x}(x-y)-u_{x}(x+y)}{y}dy = \frac{1}{\pi}\int_{-\infty}^\infty \frac{u_{x}(y)}{x-y}dy,
\end{align*}

\noindent i.e., $(-\Delta)^{1/2}u(x)$ is precisely the Hilbert transform \cite{hilbert} of $u_x(x)$, as mentioned in the introduction. On the other hand, when $\alpha\in(1,2)$,
\begin{align*}
	(-\Delta)^{\alpha/2}u(x) & = \frac{c_{\alpha}}{\alpha}\int_{0}^\infty \frac{u_{x}(x-z)-u_{x}(x)+u_{x}(x)-u_{x}(x+z)}{z^{\alpha}}dz\\
	& = -\frac{c_{\alpha}}{\alpha}\int_{0}^\infty \int_{0}^{z}\frac{u_{xx}(x-y)+u_{xx}(x+y)}{z^{\alpha}}dy\ dz\\
	& = -\frac{c_{\alpha}}{\alpha}\int_{0}^\infty \left[(u_{xx}(x-y)+u_{xx}(x+y))
	\int_{y}^\infty \frac{1}{z^{\alpha}}dz\right]dy\\
	& = -\frac{c_{\alpha}}{\alpha(\alpha - 1)}\int_{0}^\infty 
	\frac{u_{xx}(x-y)+u_{xx}(x+y)}{y^{\alpha-1}}dy \\
	& = \frac{c_{\alpha}}{\alpha(1 - \alpha)}\int_{-\infty}^\infty 
	\frac{u_{xx}(x + y)}{|y|^{\alpha-1}}dy = \frac{c_{\alpha}}{\alpha(1 - \alpha)}\int_{-\infty}^\infty 
	\frac{u_{xx}(y)}{|x-y|^{\alpha-1}}dy.
\end{align*}

\noindent Finally, when $\alpha\in(0,1)$, this last formula also holds, although the deduction is slightly different, and 
	$\lim_{x \rightarrow\pm\infty}u_{x}(x)=0$ is required. Indeed, from (\ref{intlema1}),	
	\begin{align*}
	(-\Delta)^{\alpha/2}u(x) & = \frac{c_{\alpha}}{\alpha}\int_{0}^\infty \int_{z}^\infty \frac{u_{xx}(x-y)+u_{xx}(x+y)}{z^{\alpha}}dy\ dz\\
	& = \frac{c_{\alpha}}{\alpha}\int_{0}^\infty \left[(u_{xx}(x-y)+u_{xx}(x+y))
	\int_{0}^{y}\frac{1}{z^{\alpha}}dz\right]dy\\
	& = \frac{c_{\alpha}}{\alpha(1-\alpha)}\int_{0}^\infty 
	\frac{u_{xx}(x-y)+u_{xx}(x+y)}{y^{\alpha-1}}dy \\
	& = \frac{c_{\alpha}}{\alpha(1 - \alpha)}\int_{-\infty}^\infty 
	\frac{u_{xx}(y)}{|x-y|^{\alpha-1}}dy,
	\end{align*}
	
\noindent which completes the proof of the lemma.
\end{proof}

\subsection{Mapping $\mathbb R$ to a finite interval}

As with the computation of fractional derivatives in \cite{delahozcuesta2016}, one of the main difficulties of approximating numerically \eqref{fraclapl2} is the unboundedness of the spatial domain. Hence, we follow the same approach as in \cite{delahozcuesta2016}, i.e., mapping $\mathbb R$ to a finite interval, followed by the use of a series expansion in terms of Chebyshev polynomials. Among the possible mappings, we use the so-called algebraic map \cite{Boyd1987}:

\begin{equation}
\label{e:algebraicmap}
\xi = \dfrac{x}{\sqrt{L^2 + x^2}}\in[-1,1] \Longleftrightarrow x = \dfrac{L\xi}{\sqrt{1 - \xi^2}}\in\mathbb R,
\end{equation}

\noindent with $L > 0$. Then, we consider the Chebyshev polynomials of the first kind $T_k(\xi)$ over the new domain $\xi\in[-1,1]$:
\begin{equation}
\label{e:Tk}
T_k(\xi) \equiv \cos(k\arccos(\xi)), \quad \forall k\in\mathbb Z.
\end{equation}

\noindent Therefore, introducing \eqref{e:algebraicmap} into \eqref{e:Tk}, we have the so-called rational Chebyshev functions $TB_k(x)$ \cite{Boyd1987}, which form a basis set for $\mathbb R$:
\begin{equation}
\label{e:TBk}
TB_k(x) \equiv T_k\left(\dfrac{x}{\sqrt{L^2+x^2}}\right), \quad x\in\mathbb R, \quad \forall k\in\mathbb Z.
\end{equation}

\noindent The rational Chebyshev functions are very adequate to represent functions defined on $\mathbb R$ having different types of decay as $x\to\pm\infty$  (see \cite{delahozvadillo} for a comparison of them with Hermite functions and sinc functions). Remark that they are closely related to the Christov functions and the Higgins functions \cite{boyd1990}, which are very adequate for computing numerically the Hilbert transform (see for instance \cite{weideman1995} and \cite{boydxu2011}) of functions in $L^2(\mathbb R)$, although they are not sufficient for representing the functions that we are considering in this paper, which are not in $L^2(\mathbb R)$.

In practice, we do not work directly with $TB_k(x)$ or with $T_k(\xi)$, but rather with a Fourier series expansion. Hence, we consider yet another change of variable:
$$
x = L\cot(s)\in\mathbb R\Longleftrightarrow \xi = \cos(s)\in[-1,1] \Longleftrightarrow s = \arccos(\xi) \in[0,\pi],
$$

\noindent such that $T_k(\xi) = T_k(\cos(s)) = \cos(ks)$. Therefore, a series expansion in terms of Chebyshev polynomials or rational Chebyshev functions is equivalent to a cosine expansion.

In order to express \eqref{fraclapl2} in terms of $s\in[0,\pi]$, we need the following identities \cite{Boyd1987}:
\begin{equation}
\begin{split}
	u_{x}(x) & = - \frac{\sin^{2}(s)}{L}u_{s}(s), \\
	u_{xx}(x) & = \frac{\sin^{4}(s)}{L^{2}}u_{ss}(s) + \frac{2\sin^{3}(s)\cos(s)}{L^{2}}u_{s}(s),
	\label{uderivadas}
\end{split}
\end{equation}
 
\noindent where, with some abuse of notation, $u(s) \equiv u(x(s))$. Then, bearing in mind that $dx = -L\sin^{-2}(s)ds$, \eqref{fraclapl2} becomes
\begin{equation}
(-\Delta)^{\alpha/2}u(s)=\left\{
\begin{aligned}
& {-}\frac{1}{L\pi}\int_{0}^\pi \frac{u_s(\eta)}{\cot(s) - \cot(\eta)}d\eta, & \alpha = 1, \\
& \frac{c_{\alpha}}{L^\alpha\alpha(1-\alpha)} \\
& \cdot\int_0^\pi \frac{\sin^{2}(\eta)u_{ss}(\eta) + 2\sin(\eta)\cos(\eta)u_{s}(\eta)}{|\cot(s)-\cot(\eta)|^{\alpha-1}}d\eta, & \alpha \neq 1,
\end{aligned}
\right.\label{e:fraclap0pi}
\end{equation}

\noindent or, equivalently,
\begin{equation}
(-\Delta)^{\alpha/2}u(s)=\left\{
\begin{aligned}
& \frac{\sin(s)}{L\pi}\int_{0}^\pi \frac{\sin(\eta)u_s(\eta)}{\sin(s - \eta)}d\eta, & \alpha = 1, \\
& \frac{c_{\alpha}|\sin(s)|^{\alpha-1}}{L^\alpha\alpha(1-\alpha)} \\
& \cdot\int_0^\pi \frac{\sin^{\alpha}(\eta)(\sin(\eta)u_{ss}(\eta) + 2\cos(\eta)u_{s}(\eta))}{|\sin(s-\eta)|^{\alpha-1}}d\eta, & \alpha \neq 1.
\end{aligned}
\right.\label{e:fraclap0pi2}
\end{equation}

\subsection{Discretizing the mapped bounded domain}

We discretize the interval $s\in[0,\pi]$ in the nodes defined in \eqref{e:nodessj}:
$$
s_{j}=\frac{\pi(2j+1)}{2N}\Longleftrightarrow x_{j} = L\cot\left(\frac{\pi(2j+1)}{2N}\right)\Longleftrightarrow \xi_{j}=\cos\left(\frac{\pi(2j+1)}{2N}\right),
$$

\noindent such that $s_0 = \pi / (2N)$, $s_{N-1} = \pi - \pi / (2N)$, $s_{j+1} - s_j = \pi/N$, for all $j$. Therefore, we avoid evaluating \eqref{e:fraclap0pi} directly at $s = 0$ and $s = \pi$. Even if we do not use it in this paper, let us mention, for the sake of completeness, that it is also possible to introduce a spacial shift in $x$, i.e.,
\begin{equation}
\label{e:xxc}
x = x_c + L\cot(s);
\end{equation}

\noindent so
$$
x_{j} = x_c + L\cot(s_j) = x_c + L\cot\left(\frac{\pi(2j+1)}{2N}\right).
$$

\noindent In general, along this paper, whenever we write $u(x_j)$, we refer to $u(x)$ evaluated at $x = x_j$, whereas, we write $u(s_j)$ to refer to $u(x(s))$ evaluated at $s_j$. Therefore, with some abuse of notation, $u(s_j)\equiv u(x_j)$. Observe that the definition of $s_j$ in \eqref{e:nodessj} does not depend on $x_c$, whereas the definition of $x_j$ does, which makes preferable to work with $u(s_j)$, especially when $x_c\not=0$ is used. On the other hand, as mentioned above, since $s\in[0,\pi]$, a cosine series expansion is enough to represent $u(s)$. However, we rather consider a more general series expansion formed by $e^{iks}$, with $k\in\mathbb Z$, which is somehow easier to implement numerically:
$$
u(s) = \sum\limits_{k=-\infty}^\infty \hat{u}(k)e^{iks}, \qquad s\in[0,\ \pi].
$$

\noindent Hence, in order to determine the coefficients $\hat{u}(k)$, we have to extend the definition of $u(s)$ to $s\in[0,2\pi]$. Note that an even expansion of $u(s)$ at $s = \pi$ will yield precisely a cosine series.

Since it is impossible to work with infinitely many frequencies, we approximate $u(s)$ as
\begin{equation}
u(s) \approx \sum\limits_{k=-N}^{N-1}\hat{u}(k)e^{iks}, \qquad s\in[0,\ 2\pi].
\label{uenfourier2}
\end{equation}

\noindent Then, taking $0\le j\le 2N-1$ in \eqref{e:nodessj}, we adopt a pseudospectral approach (see \cite{trefethen} for a classical introduction to spectral methods in \textsc{Matlab}) to determine uniquely the $2N$ coefficients $\hat u(k)$ in \eqref{uenfourier2}, i.e., we impose \eqref{uenfourier2} to be an equality at $s_j$:
\begin{align}
u(s_j) & \equiv \sum\limits_{k=-N}^{N-1}\hat{u}(k)e^{iks_j}
= \sum\limits_{k=-N}^{N-1}\hat{u}(k)e^{ik\pi(2j+1)/(2N)}
	\cr
& =
\sum\limits_{k=0}^{2N-1}\left[\hat{u}(k)e^{ik\pi/(2N)}\right]e^{2ijk\pi/(2N)}.
\label{uenfourier3}
\end{align}

\noindent Equivalently, the $(2N)$-periodic coefficients $\hat u(k)$ are given by
\begin{equation}
\hat{u}(k) \equiv \frac{e^{-ik\pi/(2N)}}{2N}\sum\limits_{j=0}^{2N-1}u(s_{j})e^{-2ijk\pi/(2N)}.
\label{uenfourier4}
\end{equation}

\noindent Note that the discrete Fourier transforms (\ref{uenfourier3}) and (\ref{uenfourier4}) can be computed very efficiently by means of the fast Fourier Transform (FFT) \cite{FFT}. On the other hand, we apply systematically a Krasny filter \cite{krasny}, i.e., we set to zero all the Fourier coefficients $\hat u(k)$ with modulus smaller than a fixed threshold, which in this paper is the epsilon of the machine.

\subsection{An explicit calculation of $(-\Delta)^{\alpha/2}e^{iks}$}

Since we are approximating $u(s)$ by \eqref{uenfourier2}, the problem is reduced to computing $(-\Delta)^{\alpha/2}e^{iks}$. In this section, we will prove the following theorem:

\begin{teo}
\label{t:teo}

Let $\alpha\in(0,1)\cup(1,2)$, then
\begin{equation}
\label{e:thanot1}
(-\Delta)^{\alpha/2}(e^{iks})
=
\left\{
\begin{aligned}
& \frac{c_{\alpha}|\sin(s)|^{\alpha-1}}{8L^\alpha\tan(\frac{\pi\alpha}{2})}\sum_{l=-\infty}^\infty e^{i2ls}((1-\alpha)k^2-4kl)
\\
& \ \cdot \frac{\Gamma\left(\frac{-1+\alpha}{2}+|l|\right)\Gamma\left(\frac{-1-\alpha}{2}+\left|\frac{k}{2}-l\right|\right)}{\Gamma\left(\frac{3-\alpha}{2}+|l|\right)\Gamma\left(\frac{3+\alpha}{2}+\left|\frac{k}{2}-l\right|\right)}, & \text{$k$ even,}
\\
& i\frac{c_{\alpha}|\sin(s)|^{\alpha-1}}{8L^\alpha}\sum_{l=-\infty}^\infty e^{i2ls}((1-\alpha)k^2-4kl)
\\
& \ \cdot\sgn(\tfrac{k}{2}-l)\frac{\Gamma\left(\frac{-1+\alpha}{2}+|l|\right)\Gamma\left(\frac{-1-\alpha}{2}+\left|\frac{k}{2}-l\right|\right)}{\Gamma\left(\frac{3-\alpha}{2}+|l|\right)\Gamma\left(\frac{3+\alpha}{2}+\left|\frac{k}{2}-l\right|\right)}, & \text{$k$ odd.}
\end{aligned}
\right.
\end{equation}

\noindent Moreover, when $\alpha = 1$,
\begin{equation}
\label{e:tha1}
(-\Delta)^{1/2}(e^{iks}) =
\left\lbrace
\begin{aligned}
& \frac{|k|\sin^2(s)}{L}e^{iks}, & \text{$k$ even,}
	\\
& \frac{ik}{L\pi}\left(\frac{-2}{k^2-4} - \sum_{l=-\infty}^\infty\frac{4\sgn(l)e^{i2ls}}{(k-2l)((k-2l)^2-4)}\right), & \text{$k$ odd.}
\end{aligned}
\right.
\end{equation}

\end{teo}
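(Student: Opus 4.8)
The plan is to insert $u(s)=e^{iks}$, for which $u_s=ike^{iks}$ and $u_{ss}=-k^2e^{iks}$, into the representation \eqref{e:fraclap0pi2} and to compute the resulting integral by expanding everything in Fourier modes in $s$. For $\alpha\neq1$ this means evaluating
\begin{equation*}
I(s)=\int_0^\pi\frac{\sin^\alpha(\eta)\bigl(2ik\cos(\eta)-k^2\sin(\eta)\bigr)e^{ik\eta}}{|\sin(s-\eta)|^{\alpha-1}}\,d\eta ,
\end{equation*}
after which the answer is $\frac{c_\alpha|\sin(s)|^{\alpha-1}}{L^\alpha\alpha(1-\alpha)}I(s)$. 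The first step is to treat the kernel $|\sin(s-\eta)|^{1-\alpha}$, which is $\pi$-periodic and even in $s-\eta$, by means of its classical Fourier expansion
\begin{equation*}
|\sin(\theta)|^{1-\alpha}=\sum_{m=-\infty}^\infty b_m e^{2im\theta},\qquad b_m=\frac{(-1)^m\Gamma(2-\alpha)}{2^{1-\alpha}\,\Gamma(\tfrac{3-\alpha}{2}+m)\,\Gamma(\tfrac{3-\alpha}{2}-m)} ,
\end{equation*}
which I would derive from the integral identity recalled below. Note that $b_m=b_{-m}$ depends only on $|m|$, and this is precisely what generates the arguments $|l|$ in the final Gamma quotient.

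Inserting this series and interchanging summation and integration, the coefficient of $e^{2ils}$ in $I(s)$ is simply $b_l$ times
\begin{equation*}
J_l=\int_0^\pi\sin^\alpha(\eta)\bigl(2ik\cos(\eta)-k^2\sin(\eta)\bigr)e^{i(k-2l)\eta}\,d\eta ,
\end{equation*}
so no genuine convolution occurs and the whole problem collapses to $J_l$. Writing $2\cos(\eta)=e^{i\eta}+e^{-i\eta}$, each piece is of the form $\int_0^\pi\sin^a(\eta)e^{ib\eta}\,d\eta$, and I would use the closed form
\begin{equation*}
\int_0^\pi\sin^a(\eta)e^{ib\eta}\,d\eta=\frac{\pi e^{ib\pi/2}\,\Gamma(a+1)}{2^a\,\Gamma(1+\tfrac{a+b}{2})\,\Gamma(1+\tfrac{a-b}{2})},\qquad\operatorname{Re}(a)>-1,
\end{equation*}
with $a\in\{\alpha,\alpha+1\}$ and $b=k-2l$ or $b=k-2l\pm1$. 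Two structural features appear at once. First, $b=k-2l$ gives $b/2=k/2-l$, which is the origin of the arguments $|k/2-l|$. Second, every term carries the factor $e^{i(k-2l)\pi/2}=(-1)^le^{ik\pi/2}$, and $e^{ik\pi/2}$ is real when $k$ is even but purely imaginary when $k$ is odd; this single fact produces the even/odd dichotomy of the statement and the leading factor $i$ in the odd case.

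The remaining, and main, difficulty is purely Gamma-function bookkeeping. I would first use the recurrence $\Gamma(z+1)=z\Gamma(z)$ to place $-k^2A_l+2ikB_l$ over a common denominator; the factors extracted from $\Gamma(\alpha+2)$ and $\Gamma(2-\alpha)$ cancel the prefactor $\alpha(1-\alpha)$, and the numerator collapses, after cancellation, to the polynomial factor $(1-\alpha)k^2-4kl$. Next I would apply the reflection formula $\Gamma(z)\Gamma(1-z)=\pi/\sin(\pi z)$ to the downshifted factors $\Gamma(\tfrac{3-\alpha}{2}-l)$ (from $b_l$) and $\Gamma(\tfrac{3+\alpha}{2}-\tfrac{b}{2})$ (from $J_l$), turning them into $\Gamma(\tfrac{-1+\alpha}{2}+|l|)$ and $\Gamma(\tfrac{-1-\alpha}{2}+|k/2-l|)$, respectively. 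The absolute values, and, for odd $k$, the factor $\sgn(k/2-l)$, emerge from the sign analysis of the sines produced by reflection, whose value depends on the parity of $k$ (equivalently of $b$). Finally I would simplify the accumulated trigonometric factors $\sin(\pi\alpha/2)$ and $\cos(\pi\alpha/2)$ together with $c_\alpha$ and $\Gamma(\alpha+1)$ using the duplication formula $\Gamma(2z)=\pi^{-1/2}2^{2z-1}\Gamma(z)\Gamma(z+\tfrac12)$: for even $k$ these reduce to $\cot(\pi\alpha/2)=1/\tan(\pi\alpha/2)$, whereas for odd $k$ they reduce to a pure constant, so that only the sign factor survives. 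Keeping the signs and parities straight throughout this reduction is the delicate part of the proof.

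The case $\alpha=1$ I would treat separately, directly from the first branch of \eqref{e:fraclap0pi2}, which gives $(-\Delta)^{1/2}e^{iks}=\frac{ik\sin(s)}{L\pi}\int_0^\pi\frac{\sin(\eta)e^{ik\eta}}{\sin(s-\eta)}\,d\eta$. Here I would use the principal-value Fourier series $1/\sin(\theta)=-i\sum_{p\ \mathrm{odd}}\sgn(p)e^{ip\theta}$ together with the elementary integral $\int_0^\pi\sin(\eta)e^{ic\eta}\,d\eta$, which equals $\pm i\pi/2$ when $c=\pm1$, equals $2/(1-c^2)$ when $c$ is even, and vanishes when $c$ is odd with $|c|\neq1$. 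For $k$ even, $c=k-p$ is odd, so only the two neighbours $p=k\pm1$ contribute and the expression reduces to the single term $|k|\sin^2(s)e^{iks}/L$; for $k$ odd, $c$ is even and, after reindexing the surviving odd $p$ by the even frequencies $2l$ and using $(k-2l\pm1)^2-1=(k-2l)(k-2l\pm2)$, one obtains the stated series, the exceptional value of $\sgn$ at $l=0$ accounting for the isolated term $-2/(k^2-4)$.
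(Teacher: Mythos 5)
Your proposal is correct, and its skeleton coincides with the paper's: insert $e^{iks}$ into \eqref{e:fraclap0pi2}, expand the result in the modes $e^{i2ls}$, observe that each coefficient factors into (Fourier coefficient of the kernel) $\times$ (a single integral in $\eta$), and then do the Gamma/reflection bookkeeping that produces the $|l|$, $|k/2-l|$ arguments and the parity dichotomy via $e^{i(k-2l)\pi/2}=(-1)^l e^{ik\pi/2}$. Where you genuinely differ is in the key lemmas used to evaluate the two factors. The paper obtains the factorization by projecting onto $e^{-i2ls}$, swapping the order of integration, and using the $\pi$-periodicity of the kernel to split $d_{kl}=\frac1\pi I_1 I_2$; it then evaluates $I_1$ and $I_2$ with \textsc{Mathematica} (citing classical complex-variable techniques), and only afterwards massages the output with Euler reflection. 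You instead invoke two classical closed-form identities up front — the Fourier expansion of $|\sin\theta|^{1-\alpha}$ with coefficients $b_m$ (which is exactly $I_1=\pi b_l$ in the paper's notation; I checked the two expressions agree after one application of reflection), and the Cauchy-type formula $\int_0^\pi\sin^a(\eta)e^{ib\eta}\,d\eta=\pi e^{ib\pi/2}\Gamma(a+1)/\bigl(2^a\Gamma(1+\tfrac{a+b}{2})\Gamma(1+\tfrac{a-b}{2})\bigr)$ — which makes the evaluation self-contained and CAS-free; that is a real, if modest, gain. Your treatment of $\alpha=1$ is also different: the paper handles even $k$ by an induction on $\int_0^\pi\cos(\eta)e^{ik\eta}/\sin(\eta)\,d\eta$ and odd $k$ by a separate projection argument reusing that result, whereas you use the principal-value series $1/\sin\theta=-i\sum_{p\ \mathrm{odd}}\sgn(p)e^{ip\theta}$ and the elementary integral $\int_0^\pi\sin(\eta)e^{ic\eta}\,d\eta$, treating both parities uniformly; I verified that your reindexing (including the exceptional $l=0$ term coming from the sign change of $\sgn(2l\pm1)$) reproduces both branches of \eqref{e:tha1} exactly. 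Two caveats, at the level of polish rather than substance: your final Gamma-function reduction for $\alpha\neq1$ is announced rather than carried out (the paper displays it in full), and the interchange of the kernel series with the integral for $\alpha>1$ (integrable singularity, conditionally convergent series) deserves a word of justification — though the paper's own swap of the order of integration is informal to the same degree.
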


\begin{proof}

We prove first the case $\alpha = 1$. Introducing $u(s) = e^{iks}$ in \eqref{e:fraclap0pi2}, we get
$$
(-\Delta)^{1/2}(e^{iks}) = \frac{ik\sin(s)}{L\pi}\int_{0}^\pi \frac{\sin(\eta)e^{ik\eta}}{\sin(s - \eta)}d\eta.
$$

\noindent When $k \equiv 0\bmod2$,
\begin{align*}
\int_{0}^{\pi} \frac{\sin(\eta)e^{ik\eta}}{\sin(s - \eta)}d\eta = -e^{iks}\cos(s)\int_{0}^{\pi}e^{ik\eta}d\eta - e^{iks}\sin(s)\int_{0}^{\pi} \frac{\cos(\eta)e^{ik\eta}}{\sin(\eta)}d\eta.
\end{align*}

\noindent The first integral is trivial, and the second can be calculated explicitly, too:
\begin{align*}
\int_0^\pi\frac{\cos(\eta)e^{ik\eta}}{\sin(\eta)}d\eta & = \frac{i\sgn(k)}2\int_0^{2\pi}\frac{\cos(\eta)\sin(|k|\eta)}{\sin(\eta)}d\eta
	\cr
& =
\begin{cases}
0, & k = 0,
	\\
i\pi\sgn(k), & k\in2\mathbb Z\backslash\{0\};
\end{cases}
\end{align*}

\noindent which is easily proved by induction on $2\mathbb N$, bearing in mind that $\sin(2\eta) = 2\sin(\eta)\cos(\eta)$, and that $\sin((|k|+2)\eta) - \sin(|k|\eta) = 2\sin(\eta)\cos((|k|+1)\eta)$. Therefore,
\begin{equation}
\label{e:int0picotexp}
\int_{0}^{\pi} \frac{\sin(\eta)e^{ik\eta}}{\sin(s - \eta)}d\eta =
\begin{cases}
-\pi\cos(s), & k = 0,
\\
-i\pi\sgn(k)\sin(s)e^{iks}, & k\in2\mathbb Z\backslash\{0\};
\end{cases}
\end{equation}

\noindent from which follows the first part of \eqref{e:tha1}. On the other hand, when $k\equiv1\bmod2$, $e^{ik\eta}$ is not periodic in $\eta\in[0,\pi]$, and there seems to be no compact formula for $(-\Delta)^{1/2}(e^{iks})$, as in $k\equiv0\bmod2$. Hence, we have to consider a series expansion for $(-\Delta)^{1/2}(e^{iks})$; more precisely, we write
\begin{equation}
\label{e:seriesa1}
\sin(s)\int_{0}^{\pi}\frac{\sin(\eta)e^{ik\eta}}{\sin(s - \eta)}d\eta = \sum_{l=-\infty}^\infty c_{kl}e^{i2ls},
\end{equation}

\noindent with $c_{kl}$ given by
\begin{align*}
c_{kl} & = \frac1\pi\int_{0}^\pi\left[\sin(s)\int_{0}^{\pi}\frac{\sin(\eta)e^{ik\eta}}{\sin(s - \eta)}d\eta\right]e^{-i2ls}ds
	\cr
& = \frac1\pi\int_{0}^\pi\sin(\eta)e^{ik\eta}\left[\int_0^\pi\frac{\sin(s)e^{-i2ls}}{\sin(s - \eta)}ds\right]d\eta,
\end{align*}

\noindent where we have changed the order of integration. The inner integral is given by \eqref{e:int0picotexp}:
$$
\int_{0}^{\pi} \frac{\sin(s)e^{-i2ls}}{\sin(s - \eta)}ds =
\begin{cases}
\pi\cos(\eta), & l = 0,
\\
-i\pi\sgn(l)\sin(\eta)e^{-i2l\eta}, & l\in\mathbb Z\backslash\{0\}.
\end{cases}
$$

\noindent Hence,
$$
c_{kl} =
\left\{
\begin{aligned}
& \int_{0}^\pi\sin(\eta)\cos(\eta)e^{ik\eta}d\eta = \frac{-2}{k^2 - 4}, & & l = 0,
	\\
& {-i}\sgn(l)\int_{0}^\pi\sin^2(\eta)e^{i(k-2l)\eta}d\eta = \frac{-4\sgn(l)}{(k-2l)((k-2l)^2-4)}, & & l\not=0,
\end{aligned}
\right.
$$

\noindent from which we conclude the second part of \eqref{e:tha1}.

We consider now $\alpha\not=1$. Introducing $u(s) = e^{iks}$ in \eqref{e:fraclap0pi2}, we get
\begin{equation}
\label{e:fraclapeiksanot1}
(-\Delta)^{1/2}(e^{iks}) = \frac{c_{\alpha}|\sin(s)|^{\alpha-1}}{L^\alpha\alpha(1-\alpha)}\int_0^\pi \frac{\sin^{\alpha}(\eta)(-k^2\sin(\eta) + 2ik\cos(\eta))e^{ik\eta}} {|\sin(s-\eta)|^{\alpha-1}}d\eta.
\end{equation}

\noindent Then, as in \eqref{e:seriesa1}, we consider a series expansion:
\begin{equation}
\label{e:seriesdkl}
\int_{0}^{\pi}\frac{\sin^{\alpha}(\eta)(-k^{2}\sin(\eta) + 2ik\cos(\eta))e^{ik\eta}}{|\sin(s-\eta)|^{\alpha-1}}d\eta = \sum_{l=-\infty}^\infty d_{kl}e^{i2ls},
\end{equation}

\noindent with $d_{kl}$ given by
\begin{align}
\label{dkl}
d_{kl} & = \frac{1}{\pi}\int_{0}^{\pi}\left[\int_0^\pi\frac{\sin^{\alpha}(\eta)(-k^{2}\sin(\eta) + 2ik\cos(\eta))e^{ik\eta}}{|\sin(s-\eta)|^{\alpha-1}}d\eta \right]e^{-i2ls}ds
	\cr
& = \frac{1}{\pi}\int_{0}^{\pi}\sin^{\alpha}(\eta)(-k^{2}\sin(\eta) + 2ik\cos(\eta))e^{ik\eta}\left[\int_0^\pi\frac{e^{-i2ls}}{|\sin(s-\eta)|^{\alpha-1}}ds\right]d\eta
	\cr
& = \frac{1}{\pi}\left[\int_0^\pi\frac{e^{-i2ls}}{\sin^{\alpha-1}(s)}ds\right]\left[\int_{0}^{\pi}\sin^{\alpha}(\eta)(-k^{2}\sin(\eta) + 2ik\cos(\eta))e^{i(k-2l)\eta}d\eta\right]
	\cr
& = \frac1\pi I_1\cdot I_2,
\end{align}

\noindent where we have changed again the order of integration. Integrals of the type of $I_1$ and $I_2$ can be explicitly calculated by means of standard complex-variable techniques (see for instance \cite[p. 158]{nielsen}, for a classic reference). On the other hand, we have used \textsc{Mathematica} \cite{mathematica}, which computes them immediately (after, occasionally, very minor rewriting). The expression for $I_1$ is
\begin{align}
\label{e:I1}
I_1 & = \frac{e^{-i2\pi l}((2i)^\alpha + (-2i)^\alpha e^{i2\pi l})\pi\csc(\pi\alpha)\Gamma\left(\frac{-1+\alpha-2l}{2}\right)}{4\Gamma(-1+\alpha)\Gamma\left(\frac{3-\alpha-2l}{2}\right)}
	\cr
& = -\frac{2^{\alpha-1}\cos(\frac{\pi\alpha}{2})\Gamma(2-\alpha)\Gamma\left(\frac{-1+\alpha}{2}-l\right)}{\Gamma\left(\frac{3-\alpha}{2}-l\right)},
\end{align}

\noindent where we have used the well-known Euler's reflection formula $\Gamma(z)\Gamma(1-z) = \pi / \sin(\pi z)$. Moreover, applying twice Euler's reflection formula, 
\begin{align}
\label{e:reflection}
\frac{\Gamma(z)}{\Gamma(w)} = \frac{\Gamma(z)\Gamma(1-z)\Gamma(1-w)}{\Gamma(w)\Gamma(1-w)\Gamma(1-z)} = \frac{\sin(\pi w)}{\sin(\pi z)}\frac{\Gamma(1-w)}{\Gamma(1-z)}.
\end{align}

\noindent Therefore, for $l\in\mathbb Z$,
\begin{equation}
\label{e:reflectionapplied1}
\frac{\Gamma\left(\frac{-1+\alpha}{2}-l\right)}{\Gamma\left(\frac{3-\alpha}{2}-l\right)} = \frac{\sin\left(\pi\left(\frac{3-\alpha}{2}-l\right)\right)}{\sin\left(\pi\left(\frac{-1+\alpha}{2}-l\right)\right)}\frac{\Gamma\left(1-\left(\frac{3-\alpha}{2}-l\right)\right)}{\Gamma\left(1-\left(\frac{-1+\alpha}{2}-l\right)\right)}
= \frac{\Gamma\left(\frac{-1+\alpha}{2}+l\right)}{\Gamma\left(\frac{3-\alpha}{2}+l\right)},
\end{equation}

\noindent so the value of $I_1$ does not depend on the sign of $l$, and we can replace the appearances of $l$ in \eqref{e:I1} by $-l$, $|l|$ or $-|l|$. In this paper, we consider the last option, getting
\begin{equation}
\label{e:I1absl}
I_1 = -\frac{2^{\alpha-1}\cos(\frac{\pi\alpha}{2})\Gamma(2-\alpha)\Gamma\left(\frac{-1+\alpha}{2}+|l|\right)}{\Gamma\left(\frac{3-\alpha}{2}+|l|\right)},
\end{equation}

\noindent which is more convenient from an implementation point of view, as we will explain in Section \ref{s:numimpl}. Likewise, the expression for $I_2$ is
\begin{align*}
I_2 & = -2^{-2-\alpha}e^{-i\pi(\alpha+4l)/2}((-1)^k+e^{i\pi(\alpha+2l)})
	\\
& \ \cdot\frac{k((-1+\alpha)k+4l)\pi\csc(\pi\alpha)\Gamma\left(\frac{-1-\alpha+k-2l}{2}\right)}{\Gamma(-\alpha)\Gamma\left(\frac{3+\alpha+k-2l}{2}\right)}
	\\
& =
\left\{
\begin{aligned}
& \frac{\pi\alpha(1-\alpha)((-1+\alpha)k^2+4kl)\Gamma\left(\frac{-1-\alpha}{2}+\frac{k}{2}-l\right)}{2^{2+\alpha}\sin(\frac{\pi\alpha}{2})\Gamma(2-\alpha)\Gamma\left(\frac{3+\alpha}{2}+\frac{k}{2}-l\right)}, & & \text{$k$ even,}
\\
& i\frac{\pi\alpha(1-\alpha)((-1+\alpha)k^2+4kl)\Gamma\left(\frac{-1-\alpha}{2}+\frac{k}{2}-l\right)}{2^{2+\alpha}\cos(\frac{\pi\alpha}{2}) \Gamma(2-\alpha)\Gamma\left(\frac{3+\alpha}{2}+\frac{k}{2}-l\right)}, & & \text{$k$ odd.}
\end{aligned}
\right.
\end{align*}

\noindent Then, applying again \eqref{e:reflection}, we get expressions similar to \eqref{e:reflectionapplied1}:
$$
\frac{\Gamma\left(\frac{-1-\alpha}{2}+\frac{k}{2}-l\right)}{\Gamma\left(\frac{3+\alpha}{2}+\frac{k}{2}-l\right)} =
\left\{
\begin{aligned}
& \frac{\Gamma\left(\frac{-1-\alpha}{2}-\left(\frac{k}{2}-l\right)\right)}{\Gamma\left(\frac{3+\alpha}{2}-\left(\frac{k}{2}-l\right)\right)}, & & \text{$k$ even,}
\\
& {-}\frac{\Gamma\left(\frac{-1-\alpha}{2}-\left(\frac{k}{2}-l\right)\right)}{\Gamma\left(\frac{3+\alpha}{2}-\left(\frac{k}{2}-l\right)\right)}, & & \text{$k$ odd.}
\end{aligned}
\right.
$$

\noindent Hence, we obtain an equivalent but more convenient expression of $I_2$, containing absolute values as in \eqref{e:I1absl}:
\begin{align}
\label{e:I2abs}
I_2 & =
\left\{
\begin{aligned}
& \frac{\pi\alpha(1-\alpha)((-1+\alpha)k^2+4kl)\Gamma\left(\frac{-1-\alpha}{2}+\left|\frac{k}{2}-l\right|\right)}{2^{2+\alpha}\sin(\frac{\pi\alpha}{2})\Gamma(2-\alpha)\Gamma\left(\frac{3+\alpha}{2}+\left|\frac{k}{2}-l\right|\right)}, & & \text{$k$ even,}
\\
& i\sgn(\tfrac{k}{2}-l)\frac{\pi\alpha(1-\alpha)((-1+\alpha)k^2+4kl)\Gamma\left(\frac{-1-\alpha}{2}+\left|\frac{k}{2}-l\right|\right)}{2^{2+\alpha}\cos(\frac{\pi\alpha}{2}) \Gamma(2-\alpha)\Gamma\left(\frac{3+\alpha}{2}+\left|\frac{k}{2}-l\right|\right)}, & & \text{$k$ odd.}
\end{aligned}
\right.
\end{align}

\noindent Putting \eqref{e:I1absl} and \eqref{e:I2abs} together,
\begin{equation}
d_{kl} =
\left\{
\begin{aligned}
& \cot(\tfrac{\pi\alpha}{2})\alpha(1-\alpha)((1-\alpha)k^2-4kl)
	\\
& \ \cdot\frac{\Gamma\left(\frac{-1+\alpha}{2}+|l|\right)\Gamma\left(\frac{-1-\alpha}{2}+\left|\frac{k}{2}-l\right|\right)}{8\Gamma\left(\frac{3-\alpha}{2}+|l|\right)\Gamma\left(\frac{3+\alpha}{2}+\left|\frac{k}{2}-l\right|\right)}, & & \text{$k$ even},
	\\
& i\sgn(\tfrac{k}{2}-l)\alpha(1-\alpha)((1-\alpha)k^2-4kl)
	\\
& \ \cdot\frac{\Gamma\left(\frac{-1+\alpha}{2}+|l|\right)\Gamma\left(\frac{-1-\alpha}{2}+\left|\frac{k}{2}-l\right|\right)}{8\Gamma\left(\frac{3-\alpha}{2}+|l|\right)\Gamma\left(\frac{3+\alpha}{2}+\left|\frac{k}{2}-l\right|\right)}, & & \text{$k$ odd}.
\end{aligned}
\right.
\end{equation}

\noindent Therefore, bearing in mind \eqref{e:fraclapeiksanot1} and \eqref{e:seriesdkl}, we get \eqref{e:thanot1}, which concludes the proof of the theorem.

\end{proof}

\noindent Remark: under the change of variable $x = \cot(s)$, the cosine-like and sine-like Higgins functions \cite{boyd1990} are precisely $\cos(2ks)$ and $\sin((2k+2)s)$, which are eigenfunctions of the Hilbert transform \cite{weideman1995}. Therefore, the first part of \eqref{e:tha1} follows also from the results in \cite{weideman1995}.

\subsection{Constructing an operational matrix}

\label{s:numimpl}

As explained above, in order to compute $(-\Delta)^{\alpha/2}u(x)$ for a given function $u(x)$, we first represent it as \eqref{uenfourier2}, then we apply Theorem \ref{t:teo} to each basic function $e^{iks}$. In this paper, we have opted for a matrix approach, i.e., we have constructed a differencing matrix $\M_\alpha\in\mathcal M_{(2N)\times(2N)}(\mathbb C)$ based on Theorem \ref{t:teo}, such that
\begin{equation}
\label{e:Ma}
\begin{pmatrix}
(-\Delta)^{\alpha/2} u(s_0) \\ \vdots \\ (-\Delta)^{\alpha/2} u(s_{2N-1})
\end{pmatrix}
 \approx \M_\alpha\cdot
\begin{pmatrix}
\hat u(0) \\ \vdots \\ \hat u(N-1) \\ \hat u(-N) \\ \vdots \\ \hat u(-1)
\end{pmatrix},
\end{equation}

\noindent where the nodes $s_j$ are defined in \eqref{e:nodessj}. It is vital to underline that, by choosing the appropriate strategy, the speed in the construction of $\M_\alpha$, and therefore, in the numerical computation of $(-\Delta)^{\alpha/2}u(x)$, can be increased by several orders of magnitude. Furthermore, the matrix needs to be computed just once, and then it can be reused whenever needed.

In order to generate $\M_\alpha$, we compute $(-\Delta)^{\alpha/2}(e^{iks})$ according to Theorem \ref{t:teo}, for $k \in \{-N,\ldots N-1\}$. Moreover, from \eqref{e:fraclap0pi2}, $(-\Delta)^{\alpha/2}(e^{i0s}) = (-\Delta)^{\alpha/2}(1) = 0$, and
$$
\overline{(-\Delta)^{\alpha/2}(e^{iks})} = (-\Delta)^{\alpha/2}(e^{-iks}),
$$

\noindent so we only need to calculate the cases with $k > 0$. Finally, bearing in mind that $\hat u(-N) = \hat u(N)$, but $(-\Delta)^{\alpha/2}(e^{-iNs})\not=(-\Delta)^{\alpha/2}(e^{iNs})$, we impose $(-\Delta)^{\alpha/2}(e^{-iNs}) = 0$, so the obtention of $\M_\alpha$ is reduced to considering $k\in\{1, \ldots, N-1\}$.

Note that the implementation of Theorem \ref{t:teo} offers two difficulties: the need to evaluate the gamma function a very large number of times when $\alpha\not=0$, and the fact that $l$ is taken all over $\mathbb Z$.

With respect to the gamma function, a fast and accurate implementation is usually available in every major scientific environment, such as \textsc{Matlab} \cite{matlab}, which we use in this paper. More precisely, in \textsc{Matlab}, it is computed by the command \texttt{gamma}, which is based on algorithms outlined in \cite{gammamatlab}. However, using solely \texttt{gamma} to evaluate \eqref{e:thanot1} is not numerically stable, because of the quick growth of \texttt{gamma} (for instance, \texttt{gamma(172)} yields infinity); therefore, even for rather small values of $l$, we get spurious \texttt{NaN} results, because we are dividing infinity by infinity. One possible solution would be to use the command \textsc{Matlab} \texttt{gammaln}, which computes the natural logarithm of the gamma function, $\ln\Gamma$, i.e., the so-called log-gamma function:
\begin{align*}
& \frac{\Gamma\left(\frac{-1+\alpha}{2}+|l|\right)\Gamma\left(\frac{-1-\alpha}{2}+\left|\frac{k}{2}-l\right|\right)}{\Gamma\left(\frac{3-\alpha}{2}+|l|\right)\Gamma\left(\frac{3+\alpha}{2}+\left|\frac{k}{2}-l\right|\right)} \equiv \exp\left[\ln\Gamma\left(\tfrac{-1+\alpha}{2}+|l|\right) \right.
	\cr
& \qquad + \left.\ln\Gamma\left(\tfrac{-1-\alpha}{2}+\left|\tfrac{k}{2}-l\right|\right) - \ln\Gamma\left(\tfrac{3-\alpha}{2}+|l|\right) - \ln\Gamma\left(\tfrac{3+\alpha}{2}+\left|\tfrac{k}{2}-l\right|\right)\right];
\end{align*}

\noindent bear in mind that \texttt{gammaln} is not defined for negative values, so minor rewriting would be necessary in a few cases. However, in general, a much more convenient solution is to use the basic property $\Gamma(z+1) = z\Gamma(z)$:
\begin{equation}
\label{e:gammarecursive}
\begin{split}
\frac{\Gamma\left(\frac{-1+\alpha}{2}+|l|\right)}{\Gamma\left(\frac{3-\alpha}{2}+|l|\right)} & \equiv \frac{\frac{-3+\alpha}{2}+|l|}{\frac{1-\alpha}{2}+|l|}
\cdot\frac{\Gamma\left(\frac{-3+\alpha}{2}+|l|\right)}{\Gamma\left(\frac{1-\alpha}{2}+|l|\right)}
	\cr
\frac{\Gamma\left(\frac{-1-\alpha}{2}+\left|\frac{k}{2}-l\right|\right)}{\Gamma\left(\frac{3+\alpha}{2}+\left|\frac{k}{2}-l\right|\right)} & \equiv \frac{\frac{-3-\alpha}{2}+\left|\frac{k}{2}-l\right|}{\frac{1+\alpha}{2}+\left|\frac{k}{2}-l\right|}
\cdot\frac{\Gamma\left(\frac{-3-\alpha}{2}+\left|\frac{k}{2}-l\right|\right)}{\Gamma\left(\frac{1+\alpha}{2}+\left|\frac{k}{2}-l\right|\right)},
\end{split}
\end{equation}

\noindent where we consider separately the expressions containing $|l|$, and those containing $|k/2-l|$, because, for $|l|\gg1$,
$$
\frac{\frac{-3+\alpha}{2}+|l|}{\frac{1-\alpha}{2}+|l|} \approx 1, \qquad \frac{\frac{-3-\alpha}{2}+\left|\frac{k}{2}-l\right|}{\frac{1+\alpha}{2}+\left|\frac{k}{2}-l\right|} \approx 1,
$$

\noindent so the factorizations in \eqref{e:gammarecursive} are extremely stable from a numerical point of view. We apply recursively \eqref{e:gammarecursive}, until $|l| = 0$, and $|k/2-l| = 0$ (if $k$ even) or $|k/2-l|=1/2$ (if $k$ odd). Therefore, for any $l$ and $k$, the evaluations of $\Gamma$ needed to compute the left-hand sides of \eqref{e:gammarecursive} are just those in the quotients $\Gamma((-1+\alpha)/2) / \Gamma((3-\alpha)/2)$, $\Gamma((-1-\alpha)/2) / \Gamma((3+\alpha)/2)$ (if $k$ even), and $\Gamma(-\alpha/2) / \Gamma(2+\alpha/2)$ (if $k$ odd). Hence, taking into account that $\Gamma$ appears also in the definition of $c_\alpha$, it follows that the global number of evaluations of $\Gamma$ needed to compute \eqref{e:thanot1} is very small, although, unfortunately, it does not seem possible to remove completely all the evaluations of $\Gamma$.

Bearing in mind the previous arguments, in order to approximate \eqref{e:thanot1}, we precompute recursively the right-hand sides of \eqref{e:gammarecursive} for a large enough number of values, then store them in their respective vectors:
\begin{equation}
\label{e:recursion}
\begin{split}
\frac{\Gamma\left(\frac{-1+\alpha}{2}+|l|\right)}{\Gamma\left(\frac{3-\alpha}{2}+|l|\right)} & \equiv \frac{\Gamma\left(\frac{-1+\alpha}{2}\right)}{\Gamma\left(\frac{3-\alpha}{2}\right)}\prod_{m=0}^{|l|-1}\frac{\frac{-1+\alpha}{2}+m}{\frac{3-\alpha}{2}+m}, \quad \forall |l|\in\mathbb N,
	\cr
\frac{\Gamma\left(\frac{-1-\alpha}{2}+|\tilde l|\right)}{\Gamma\left(\frac{3+\alpha}{2}+|\tilde l|\right)} & \equiv \frac{\Gamma\left(\frac{-1-\alpha}{2}\right)}{\Gamma\left(\frac{3+\alpha}{2}\right)}\prod_{m=0}^{|\tilde l|-1}\frac{\frac{-1-\alpha}{2}+m}{\frac{3+\alpha}{2}+m}, \quad \forall |\tilde l|\in\mathbb N,
	\cr
\frac{\Gamma\left(\frac{-\alpha}{2}+|\tilde l|\right)}{\Gamma\left(2+\frac{\alpha}{2}+|\tilde l|\right)} & \equiv \frac{\Gamma\left(\frac{-\alpha}{2}\right)}{\Gamma\left(2+\frac{\alpha}{2}\right)}\prod_{m=0}^{|\tilde l|-1}\frac{\frac{-\alpha}{2}+m}{2+\frac{\alpha}{2}+m}, \quad \forall |\tilde l|\in\mathbb N,
\end{split}
\end{equation}

\noindent where the second expression is used in the cases with $k$ even, and the third one, in the cases with $k$ odd. Remark that the usage of absolute values makes trivial the computational evaluation of the vectors thus generated, because $|l|$, $|k/2 - l|$ (if $k$ even), and $|k/2 - l| + 1/2$ (if $k$ odd) are precisely their respective indices.

With respect to $l$ spanning $\mathbb Z$, we decompose it as $l = l_1N+l_2$, with $l_1\in\mathbb Z$, and $l_2\in\{-N/2,\ldots,N/2-1\}$; note that we take $l_2$ between $-N/2$ and $N/2-1$, rather than between $0$ and $N-1$, because the numerical results appear to be slightly more accurate in that way. Then, we observe that
$$
e^{i2ls_j} = e^{i2(l_1N+l_2)\pi(2j+1)/(2N)} = (-1)^{l_1}e^{i2l_2s_j},
$$

\noindent i.e., aliasing occurs when evaluating $e^{i2ls}$ in the actual nodes. Therefore, we truncate $l_1$, i.e., take $l_1\in\{-l_{lim}, \ldots, l_{lim}\}$, for $l_{lim}$ a large nonnegative integer. Then, \eqref{e:thanot1} becomes
\begin{equation}
\label{e:thanot1truncated}
(-\Delta)^{\alpha/2}(e^{iks_j})
\approx
\left\{
\begin{aligned}
& \frac{c_{\alpha}|\sin(s_j)|^{\alpha-1}}{8L^\alpha\tan(\frac{\pi\alpha}{2})}\sum_{l_2=-N/2}^{N/2-1}\Bigg[
\sum_{l_1=-l_{lim}}^{l_{lim}} a_{k,l_1,l_2}\Bigg]e^{i2l_2s_j}, & \text{$k$ even,}
\\
& i\frac{c_{\alpha}|\sin(s_j)|^{\alpha-1}}{8L^\alpha}\sum_{l_2=-N/2}^{N/2-1}\Bigg[\sum_{l_1=-l_{lim}}^{l_{lim}} a_{k,l_1,l_2}\Bigg]e^{i2l_2s_j}, & \text{$k$ odd,}
\end{aligned}
\right.
\end{equation}

\noindent where
\begin{equation}
a_{k,l_1,l_2} = 
\left\{
\begin{aligned}
& (-1)^{l_1}((1-\alpha)k^2-4k(l_1N+l_2))
\\
& \ \cdot \frac{\Gamma\left(\frac{-1+\alpha}{2}+|l_1N+l_2|\right)\Gamma\left(\frac{-1-\alpha}{2}+\left|\frac{k}{2}-l_1N-l_2\right|\right)}{\Gamma\left(\frac{3-\alpha}{2}+|l_1N+l_2|\right)\Gamma\left(\frac{3+\alpha}{2}+\left|\frac{k}{2}-l_1N-l_2\right|\right)}, & \text{$k$ even,}
\\
& (-1)^{l_1}((1-\alpha)k^2-4k(l_1N+l_2))\sgn(\tfrac{k}{2}-l_1N-l_2)
\\
& \ \cdot  \frac{\Gamma\left(\frac{-1+\alpha}{2}+|l_1N+l_2|\right)\Gamma\left(\frac{-1-\alpha}{2}+\left|\frac{k}{2}-l_1N-l_2\right|\right)}{\Gamma\left(\frac{3-\alpha}{2}+|l_1N+l_2|\right)\Gamma\left(\frac{3+\alpha}{2}+\left|\frac{k}{2}-l_1N-l_2\right|\right)}, & \text{$k$ odd.}
\end{aligned}
\right.
\end{equation}

\noindent In this way, since we have used \eqref{e:recursion} to precompute the appearances of $\Gamma$ and have stored them in three vectors, the computation of
$$
\sum_{l_1=-l_{lim}}^{l_{lim}} a_{k,l_1,l_2}
$$

\noindent is reduced to sums and products and can be done in a very efficient way. Remark that, from the decomposition $l = l_1N + l_2$, it follows that, in order to generate the whole matrix $\M_\alpha$, the minimum length of the vectors generated from \eqref{e:recursion} is respectively $(l_{lim}+1/2)N + 1$, $(l_{lim} + 1)N$ and $(l_{lim} + 1)N$.

Finally, we perform the sum over $l_2$ in \eqref{e:thanot1truncated}. Since
$$
e^{i2l_2s_{N-1-j}} = e^{-i2l_2s_j}, \qquad e^{i2l_2s_{j+N}} = e^{i2l_2s_j},
$$

\noindent it is enough to compute \eqref{e:thanot1truncated}, for with $j\in\{0, \ldots, N/2-1\}$, and extend the results until $j = 2N-1$, by means of those symmetries. Alternatively, it is possible to use the FFT, too.

Let us finish this section by mentioning that the case $\alpha = 1$ in \eqref{e:tha1} presents no difficulty. When $k$ is even, it is trivial to implement, and when $k$ is odd, we factorize and truncate $l$, as when $\alpha\not=1$, obtaining
\begin{equation}
\label{e:tha1truncated}
(-\Delta)^{1/2}(e^{iks_j})
\approx
\left\{
\begin{aligned}
& \frac{|k|\sin^2(s_j)}{L}e^{iks_j}, & \text{$k$ even,}
	\\
& \frac{ik}{L\pi}\left(\frac{-2}{k^2-4} - \sum_{l_2=-N/2}^{N/2-1}\left[\sum_{l_1=-l_{lim}}^{l_{lim}}b_{k,l_1,l_2}\right]e^{i2l_2s_j} \right), & \text{$k$ odd,}
\end{aligned}
\right.
\end{equation}

\noindent with
$$
b_{k,l_1,l_2} = \frac{4(-1)^{l_1} \sgn(l_1N+l_2)}{(k-2(l_1N+l_2))((k-2(l_1N+l_2))^2-4)}.
$$

\section{Numerical tests}

\label{s:numtests}

We have first considered two functions with polynomial decay:
$$
u_1(x) = \frac{x^2 - 1}{x^2 + 1}, \qquad u_2(x) = \frac{2x}{x^2 + 1},
$$

\noindent where the first one tends to $1$ as $\mathcal O(1/x^2)$, and the second one tends to $0$ as $\mathcal O(1/x)$. Under the change of variable $x = \cot(s)$, with $L = 1$, these functions become respectively $u_1(s) = \cos(2s)$ and $u_2(s) = \sin(2s)$, i.e., the real and imaginary parts of $e^{i2s}$, so we have computed the fractional Laplacian of $u_1(x)+i\,u_2(x)$. Using \textsc{Mathematica} applied to \eqref{intlema1}, and further simplifying the result by hand, we get
$$
(-\Delta)^{\alpha/2}(u_1(x) + i\,u_2(x)) = -\frac{2\Gamma(1+\alpha)}{(1+i\,x)^{1+\alpha}},
$$

\noindent or, in the $s$ variable,
\begin{equation}
\label{e:ei2sexact}
(-\Delta)^{\alpha/2}(e^ {i2s}) = -\frac{2\Gamma(1+\alpha)}{(1+i\cot(s))^{1+\alpha}} = -2\Gamma(1+\alpha)(-i\sin(s)e^{is})^{1+\alpha};
\end{equation}

\noindent note that, when $\alpha = 1$, we recover \eqref{e:tha1}. In general, it seems possible to compute explicitly $(-\Delta)^{\alpha/2}(e^{iks})$ when $k$ is even, at least for small values of $k$, although the complexity of the resulting expressions quickly grows with $k$, and it does not follow an evident pattern. On the other hand, we have been unable to obtain a compact formula for $(-\Delta)^{\alpha/2}(e^{iks})$, when $k$ is odd.

Taking different values of $N$ and $l_{lim}$, we have approximated numerically $(-\Delta)^{\alpha/2}(e^{i2s})$, which we denote as $[(-\Delta)^{\alpha/2}]_{num}(e^{i2s})$, by means of \eqref{e:thanot1truncated}, without generating the whole matrix $\M_\alpha$, for $\alpha \in \{0.01, 0.02, \ldots, 1.99\}$, except for the case $\alpha = 1$, which is trivial (altogether, $1998$ different values of $\alpha$). Then, we have compared the results with their exact value of $[(-\Delta)^{\alpha/2}]_{exact}(e^{i2s})$ given by \eqref{e:ei2sexact}, and computed the discrete $L^\infty$-norm of the error as a function of $\alpha$. In Table \ref{t:errorsei2s}, we show the maximum global value of the norm considering all $\alpha$, i.e.,
\begin{align}
\label{e:errorei2s}
\max_\alpha & \|[(-\Delta)^{\alpha/2}]_{num}(e^{i2s}) - [(-\Delta)^{\alpha/2}]_{exact}(e^{i2s})\|_\infty
	\cr
& = \max_\alpha\max_j\left|[(-\Delta)^{\alpha/2}]_{num}(e^{i2s_j}) - [(-\Delta)^{\alpha/2}]_{exact}(e^{i2s_j})\right|,
\end{align}

\noindent for different values of $N$ and $l_{lim}$. For comparison purposes, we also offer $l_{total}\equiv (2l_{lim}+1)N$, which is the exact number of values of $l$ taken, i.e., $l \in\{-l_{total}/2, \ldots, l_{total}/2-1\}$. The results reveal that the value of $l_{lim}$ necessary to achieve an error of the order of $5\cdot10^{-13}$ slowly decreases as $N$ is doubled, but, more importantly, the accuracy of the method does not deteriorate, as $N$ increases.
\begin{table}[!htbp]
	\centering
	\begin{tabular}{|c||c|c|c||c|c|c|}
		\hline $N$ & $l_{lim}$ & $l_{total}$ & Error & $l_{lim}$ & $l_{total}$ & Error
		\\
		\hline $4$ & $300$ & $4244$ & $4.8893\cdot10^{-12}$ & $530$ & $4244$ & $5.0268\cdot10^{-13}$
		\\
		\hline $8$ & $240$ & $6888$ & $4.9423\cdot10^{-12}$ & $430$ & $6888$ & $4.8097\cdot10^{-13}$
		\\
		\hline $16$ & $200$ & $11536$ & $4.8413\cdot10^{-12}$ & $360$ & $11536$ & $4.9461\cdot10^{-13}$
		\\
		\hline $32$ & $170$ & $19232$ & $4.5606\cdot10^{-12}$ & $300$ & $19232$ & $5.0138\cdot10^{-13}$
		\\
		\hline $64$ & $140$ & $32064$ & $4.9236\cdot10^{-12}$ & $250$ & $32064$ & $5.0184\cdot10^{-13}$		
		\\
		\hline $128$ & $120$ & $53888$ & $4.5427\cdot10^{-12}$ & $210$ & $53888$ & $5.0219\cdot10^{-13}$
		\\
		\hline $256$ & $100$ & $92416$ & $4.6956\cdot10^{-12}$ & $180$ & $92416$ & $5.0570\cdot10^{-13}$
		\\
		\hline $512$ & $80$ & $154112$ & $5.7013\cdot10^{-12}$ & $150$ & $154112$ & $5.0823\cdot10^{-13}$
		\\
		\hline $1024$ & $70$ & $287744$ & $4.8721\cdot10^{-12}$ & $140$ & $287744$ & $5.2887\cdot10^{-13}$
		\\
		\hline 
	\end{tabular}
	\caption{Maximum global error, given by \eqref{e:errorei2s}, between the numerical approximation of $(-\Delta)^{\alpha/2}(e^{i2s})$, given by \eqref{e:thanot1truncated}, and its exact value, given by \eqref{e:ei2sexact}. We have considered different values of $N$ and, for each $N$, a couple of values of $l_{lim}$. For comparison purposes, we also offer the total number of values of $l$ considered, $l_{total}\equiv (2l_{lim}+1)N$.}\label{t:errorsei2s}
\end{table}

In order to better understand the choice of $l_{lim}$ on the accuracy of the results, we have approximated $(-\Delta)^{\alpha/2}(e^{i2s})$, for $l_{lim} \in\{0, 1, \ldots, 1000\}$, and have plotted in Figure \ref{f:errorei2s} the corresponding maximum global error given by \eqref{e:errorei2s}. As we can see, the errors quickly decay from $l_{lim} = 0$, with an error of $3.1960\cdot10^{-3}$, to $l_{lim} = 210$, with an error of $5.0219\cdot10^{-13}$, from which it remains constant up to infinitesimal variations. This is important, because it shows that \eqref{e:thanot1truncated} is numerically stable, even for larger values of $l_{lim}$. A practical consequence of this is that, in case of doubt, it is safe to take a rather large value of $l_{lim}$.
\begin{figure}[!htbp]
\centering
\includegraphics[width=0.5\textwidth, clip=true, align=t]{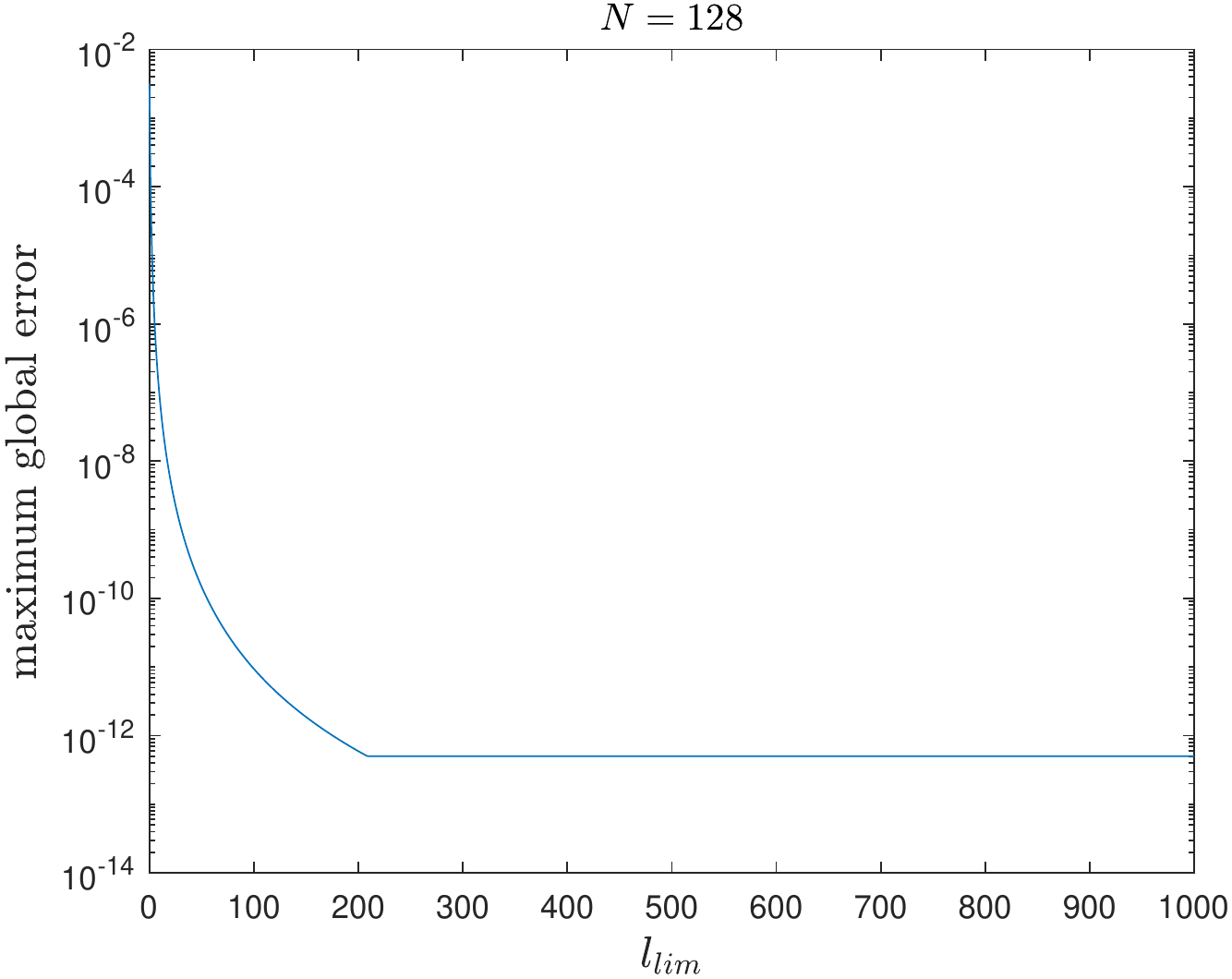}
	\caption{Maximum global error for $N = 128$, as a function of $l_{lim}$.}
\label{f:errorei2s}	
\end{figure}

Let us consider now a function with Gaussian decay,
$$
u_3=\exp(-x^{2}),
$$

\noindent such that (see, for instance, \cite[pp. 29-30]{pozrikidis}).
\begin{equation}
\label{e:Dav3exact}
(-\Delta)^{\alpha/2}u_3(x)=\frac{2^{\alpha}\Gamma(1/2+\alpha/2)}{\sqrt{\pi}}
{}_{1}F_{1}(1/2+\alpha/2,\ 1/2,\ -x^{2}),
\end{equation}

\noindent where $_{1}F_1$ is the Kummer confluent hypergeometric function, which can be evaluated accurately, among others, by \textsc{Matlab} (with the command \texttt{hypergeom}) and \textsc{Mathematica} (with the command \texttt{Hypergeometric1F1}), even if its evaluation is extremely time-consuming.

In this example, we have generated the whole matrix $\M_\alpha$ applied to the Fourier expansion \eqref{uenfourier2} of $u_3(s)$, as in \eqref{e:Ma}. Remember that, since $s\in[0,\pi]$, we have to extend it to $s\in[0,2\pi]$. In general, the most common option is an even extension at $s = \pi$, which yields a function that is at least continuous in $s\in[0,2\pi]$, and can be represented as a cosine series in $s$. However, in some cases, there are extensions that are smoother than the even one (see v.g. \cite{morton2009} and \cite{hybrechs2010}), causing the Fourier coefficients in \eqref{uenfourier2} to decay faster. This is not a minor point, because, even if $(-\Delta)^{\alpha/2}(e^{iks})$ can be computed accurately as we have seen in the previous example, the overall quality of the results depends also on the adequacy of the representation \eqref{uenfourier2}.

In this example, since $u_3(x)$ tends to zero as $x\to\pm\infty$ (or $s\to0^+$ and $s\to\pi^-$), we have considered both an even and an odd extension at $s = \pi$, i.e., such that $u_3(\pi^{+}) = u_3(\pi^{-})$ and $u_3(\pi^{+}) = -u_3(\pi^{-})$, respectively. For this function, in the even case, we also have that $u_3(s + \pi) = u_3(s)$, which implies that only even frequencies appear in \eqref{uenfourier2}; whereas in the odd case, $u_3(s + \pi) = -u_3(s)$, so only odd frequencies appear in \eqref{uenfourier2}. As a consequence, besides comparing two types of extensions, we are also testing the even and odd cases in \eqref{e:thanot1truncated} and \eqref{e:tha1truncated}.

We have approximated $(-\Delta)^{\alpha/2}u_3(x)$ for $\alpha\in\{0.01, 0.02, \ldots, 1.99\}$ (including the case $\alpha = 1$), for $L = 1$, $l_{lim} = 500$, and different values of $N$. In Table \ref{t:errorsgauss}, we give the maximum global errors computed as in \eqref{e:errorei2s}. As we can see, the even extension provides only slightly better results, and the errors quickly decays, as $N$ increases.
\begin{table}[!htbp]
	\centering
	\begin{tabular}{|c|c|c|c||c|c|c|}
		\hline $N$ & Error (even) & Error (odd)
		\\
		\hline $4$ & $3.8426\cdot10^{-1}$ & $4.8492\cdot10^{-1}$
		\\
		\hline $8$ & $1.1222\cdot10^{-1}$ & $1.3210\cdot10^{-1}$
		\\
		\hline $16$ & $1.4269\cdot10^{-2}$ & $1.7825\cdot10^{-2}$
		\\
		\hline $32$ & $4.0393\cdot10^{-4}$ & $4.7926\cdot10^{-4}$
		\\
		\hline $64$ & $1.4351\cdot10^{-6}$ & $1.6891\cdot10^{-6}$
		\\
		\hline $128$ & $1.5947\cdot10^{-10}$ & $1.8755\cdot10^{-10}$
		\\
		\hline $256$ & $8.3982\cdot10^{-12}$ & $2.5453\cdot10^{-11}$
		\\
		\hline 
	\end{tabular}
	\caption{Maximum global error, between the numerical approximation of $(-\Delta)^{\alpha/2}(e^{-x^2})$, and its exact value, given by \eqref{e:Dav3exact}, for different values of $N$, considering an even extension and an odd extension. $l_{lim} = 500$.}\label{t:errorsgauss}
\end{table}

Even if we have taken so far $L = 1$, this is usually by no means the best option, as we can see in Figure \ref{f:errorgaussLevenodd}, where we have plotted the maximum global error for $N = 64$, and $L\in\{0.1, 0.2, \ldots, 10\}$. The results for the even extension and the odd extension are again similar, and the best errors are achieved in both cases at $L = 4.6$, and are respectively $3.8400\cdot10^{-13}$ and $3.9466\cdot10^{-13}$. Therefore, a good choice of $L$ can improve drastically the accuracy of the results.

\begin{figure}[!htbp]
	\centering
	\includegraphics[width=0.5\textwidth, clip=true, align=t]{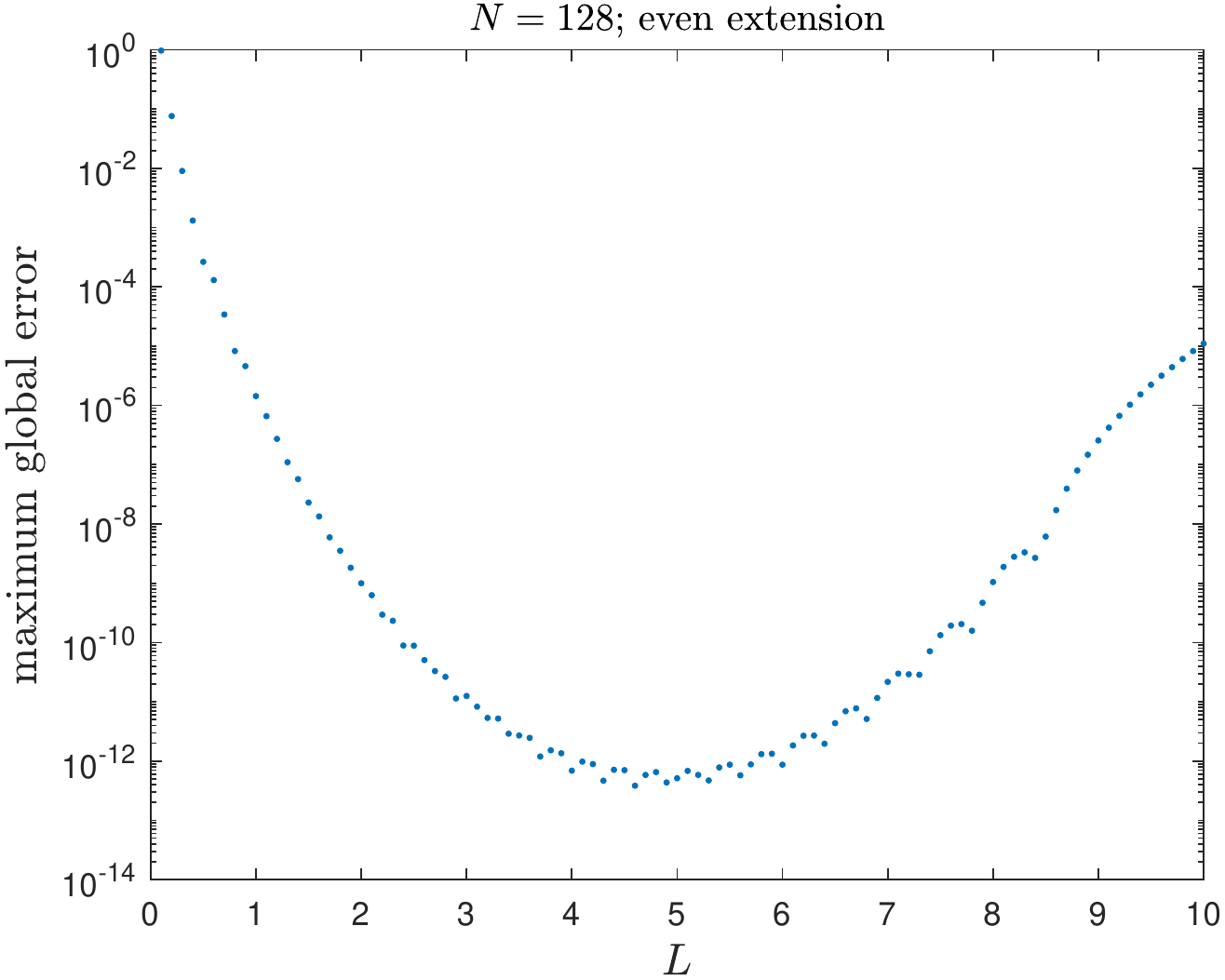}\includegraphics[width=0.5\textwidth, clip=true, align=t]{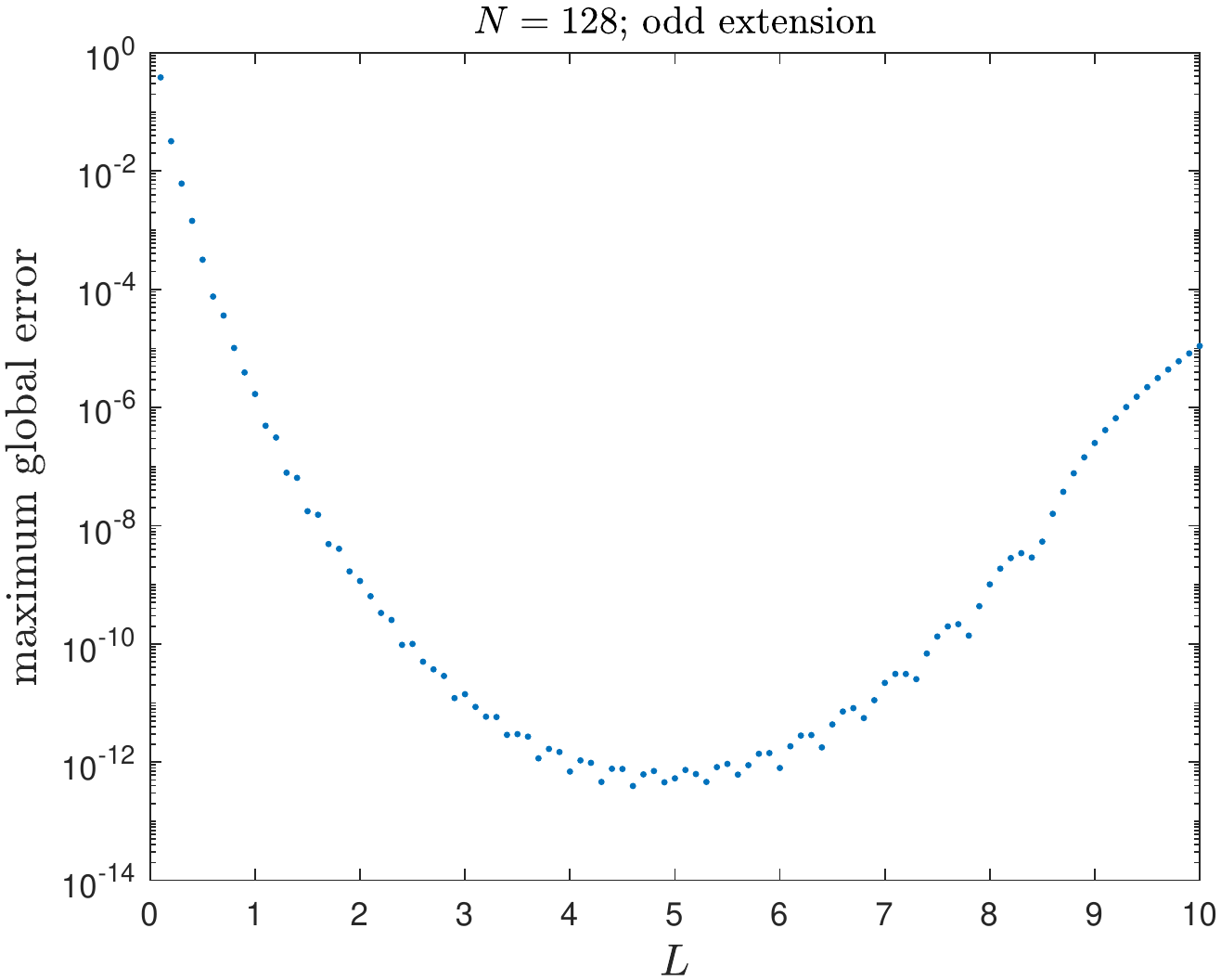}
	\caption{Maximum global error for $N = 64$, and $L\in\{0.1, 0.2, \ldots, 1\}$, considering an even extension and an odd extension.}
	\label{f:errorgaussLevenodd}
\end{figure}

Although there are some theoretical results \cite{Boyd1982}, the optimal value of $L$ depends on more than one factor: number of points, class of functions, type of problem, etc (see also \cite{delahozvadillo,delahozcuesta2016}). For instance, in the case of $(-\Delta)^{\alpha/2}$, the best choice of $L$ might depend on $\alpha$, too. However, a good working rule of thumb seems to be that the absolute value of a given function at the extreme grid points is smaller than a threshold. On the other hand, the Fourier representation \eqref{uenfourier2}, together with \eqref{e:xxc}, makes straightforward to change $L$ (and $x_c$ or $N$). Let us recall that, given a function $u(x)$, we are considering a spectral interpolant such that
\begin{equation}
\label{e:u(x)L}
u(x) \approx \sum_{k=-N}^{N-1}\hat u(k)e^{ik\arccot((x-x_c)/L)},
\end{equation}

\noindent and, to determine $\{\hat u(k)\}$ we ask \eqref{e:u(x)L} to be an equality at the nodes $x_j = x_c+L\cot(s_j)$, which yields \eqref{uenfourier4}. Therefore, if we choose new values of $L$ and $x_c$, say $L_{new}$ and $x_{c,new}$, and want to approximate $u(x)$ at the corresponding nodes $x_{new,j} = x_{c,new}+L_{new}\cot(s_j)$ by using \textit{spectral interpolation}, it is enough to evaluate the right-hand side of \eqref{e:u(x)L} at those nodes:
$$
u(x_{new,j}) \approx \sum_{k=-N}^{N-1}\hat u(k)e^{ik\arccot((x_{c,new}-x_c+L_{new}\cot(s_j))/L)},
$$

\noindent where, when $0\le j \le N - 1$, we consider the $\arccot$ function to be defined in $[0,\pi)$, and, when $N\le j\le 2N-1$, to be defined in $[\pi,2\pi)$. Moreover, from $\{u(x_{new,j})\}$, we obtain the corresponding $\{\hat u_{new}(k)\}$ by using again a pseudospectral approach, i.e., by imposing that \eqref{e:u(x)L} with the updated $L_{new}$ and $x_{c,new}$ is an equality at $x = x_{j,new}$, for all $j$:
$$
u(x_{new,j}) = \sum_{k=-N}^{N-1}\hat u_{new}(k)e^{ik\arccot((x_{new,j}-x_{c,new})/L_{new})},
$$

\noindent so the coefficients $\hat u_{new}(k)$ are given by \eqref{uenfourier4}, introducing  $u(x_{new,j})$ in the place of $u(s_j)$, and taking $L_{new}$ and $x_{c,new}$. Then,
$$
u(x) \approx \sum_{k=-N}^{N-1}\hat u_{new}(k)e^{ik\arccot((x-x_{c,new})/L_{new})},
$$

\noindent Finally, \eqref{e:u(x)L} allows also changing $N$; e.g., if $N$ is increased, we just add some extra $\hat u(k)$ equal to zero; if it is decreased, we remove some $\hat u(k)$. In all the cases considered, it is important to choose the new values of $L$, $x_c$ and $N$, in such a way that there is no loss of accuracy.

\section{Numerical experiments for (\ref{fishereq})}

\label{s:fisher}

As an illustration of the method presented in Section \ref{s:ComputationFracLap}, we will simulate numerically the one-dimensional nonlinear evolution equation (\ref{fishereq}) in the monostable case, i.e., with the following nonlinear source term:
\begin{equation}
f(u) = u(1-u).
\label{sourceterm1}
\end{equation}

\noindent The zeros of (\ref{sourceterm1}) correspond to stationary states, $u \equiv 1$ being stable and $u \equiv 0$ being unstable. We recall that in the local case with  $\alpha=2$, which corresponds to the integer-order Laplacian, a solution of (\ref{fishereq}) may take the form of a traveling wave front as $t$ increases, thus traveling with constant speed. Such fronts travel to the right, approximating $u=1$, as $x \to-\infty$, and $u=0$, as $x\to\infty$. Such solutions settle as the stable state invades the unstable one \cite{kendall}. The wave speed depends on the decay in the tail of the initial condition, and is greater for slower decays (see e.g. \cite{needham}, \cite{aronson2} and others). 

For $\alpha\in(0,2)$, we expect front solutions that travel to the right, invading $u=0$ soon after initiating the evolution, with an initial condition $u_0(x)$ that satisfies $u_0(x)\to 1$, as $x\to-\infty$, and $u_0(x)\to 0$, as $x\to\infty$. Unlike in the local case, these fronts do not travel with constant speed, but with a speed that increases exponentially with $t$ (see \cite{CabreRoquejoffre2013}); more precisely, one expects $c(t)\sim \exp(\sigma\, t)$ with $\sigma=f'(0)/\alpha$ or faster for slow decaying initial conditions, and $\sigma=f'(0)/(1+\alpha)$, for fast decaying ones. 

We restrict ourselves to the example of slow decaying (according to \cite{CabreRoquejoffre2013}) initial conditions; more precisely, we consider
\[
u(x,0) = \left(\frac{1}{2} - \frac{x}{2\sqrt{1+x^{2}}}\right)^{\alpha}.
\]

\noindent In order to check that, for $\alpha\in(0,2)$, the propagation has indeed speed that increases exponentially with time, we track the evolution of $x_{0.5}(t)$, which denotes the value of $x$ such that $u(x,t) = 0.5$. and gives an approximation of the position of the front. To obtain $x_{0.5}$, we apply a bisection method: we find the value of $j$ for which $u(x_{j+1}) < 0.5 < u(x_j)$; then, we approximate $u((x_{j}+u_{j+1})/2)$ by spectral interpolation, etc., until convergence is achieved.

\begin{figure}[!htbp]
	\begin{center}
		\includegraphics[width=0.496\textwidth, clip=true, align=b]{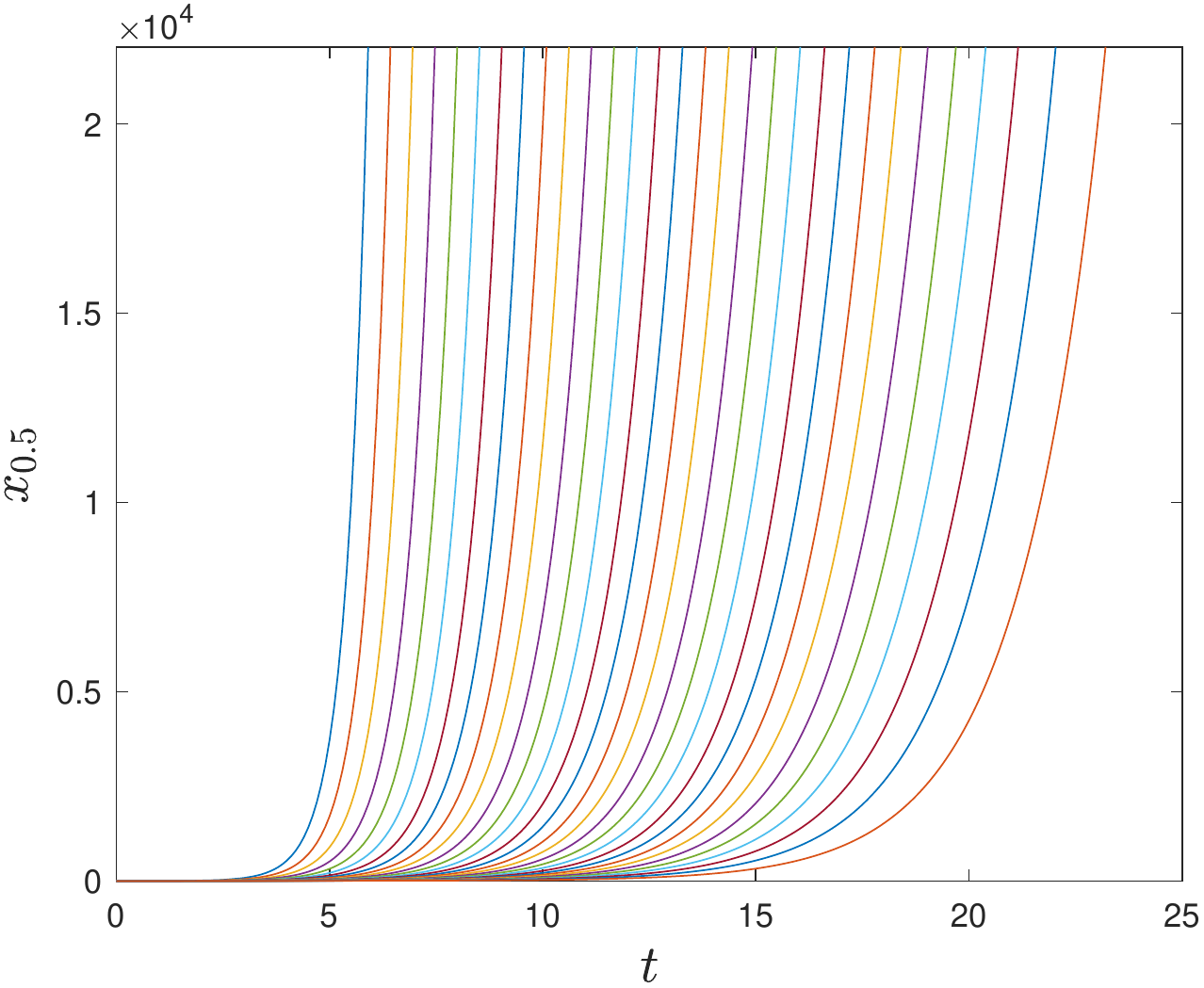}\includegraphics[width=0.504\textwidth, clip=true, align=b]{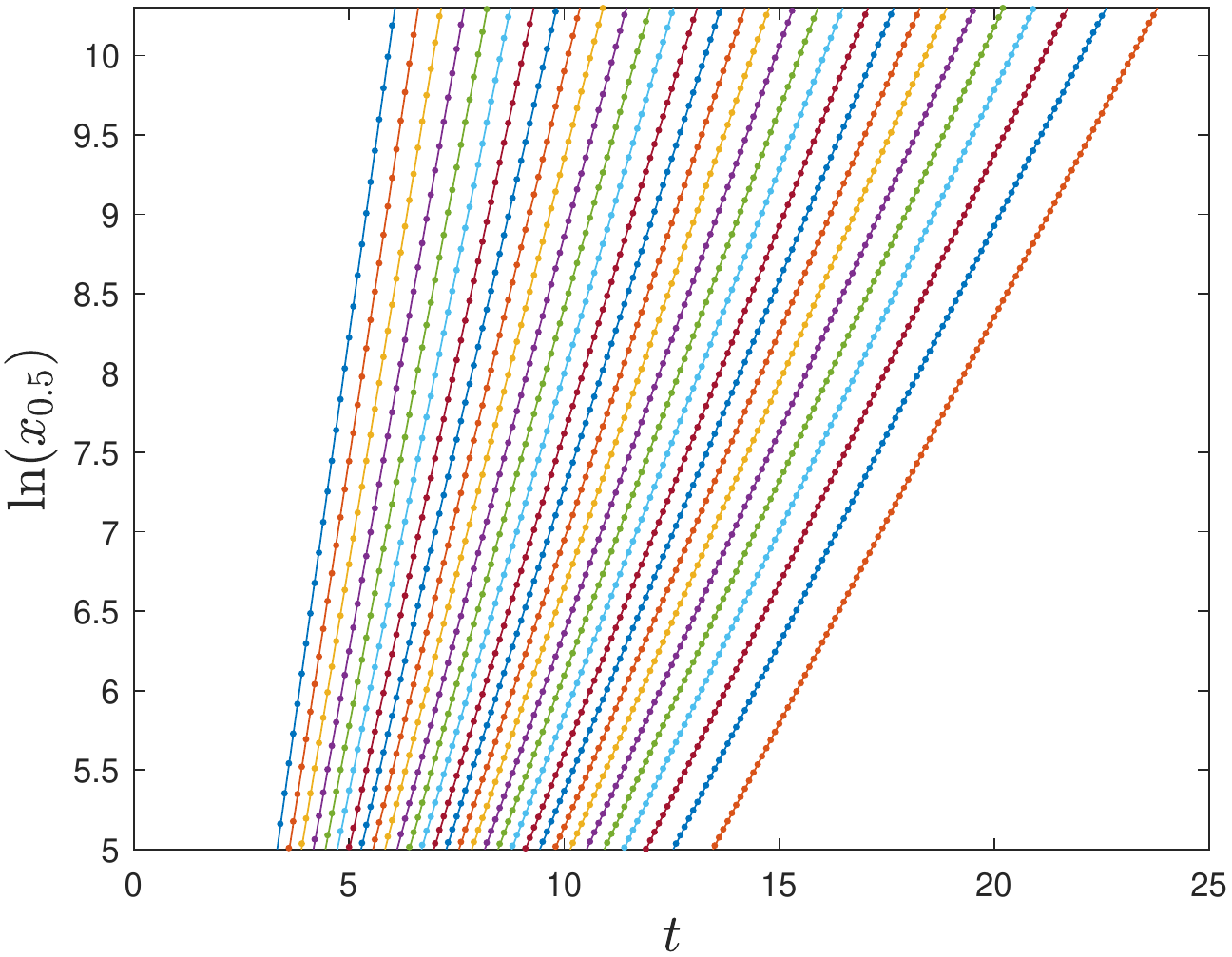}
		\caption{$\alpha=0.5, 0.55, \ldots, 1.95$, $L=10^3 / \alpha^3$, $\Delta t = 0.01$ and $N=1024$. Left: $x_{0.5}(t)$ against $t$. Right: $\ln(x_{0.5}(t))$ against $t$, and the corresponding least-square fitting lines. In both subfigures, the curves are ordered according to $\alpha$: the left-most ones correspond to $\alpha = 0.5$, and the right-most ones, to $\alpha = 1.95$.}\label{f:fittingx05vst}
	\end{center}
\end{figure}

In all the numerical experiments, we have considered an even extension at $s = \pi$, which is enough for our purposes, taken $l_{lim} = 500$ in \eqref{e:thanot1truncated}, and used the classical fourth-order Runge-Kutta scheme (see for instance \cite[p. 226]{shampine1997}) to advance in time. We have done the numerical simulation for $\alpha = 0.5, 0.55, \ldots, 1.95$, taking $\Delta t = 0.01$ and $N = 1024$. Since the exponential behavior of $x_{0.5}(t)$ appears earlier for smaller $\alpha$, larger values of $L$ appear to be convenient in that case. In this example, after a couple of trials, we have found that taking $L = 1000 / \alpha^3$ produces satisfactory results. On the left-hand side of Figure \ref{f:fittingx05vst}, we have plotted $x_{0.5}(t)$ against $t$. On the right-hand side of Figure \ref{f:fittingx05vst}, we have plotted $\ln(x_{0.5}(t))$ against $t$, omitting the initial times, so the exponential regime is clearly observable; in all cases, the points are separated by time increments of $0.1$, and, for each value of $\alpha$, the accompanying line is precisely the least-square fitting line, which shows that the linear alignment is almost perfect.

In Figure \ref{f:fittingsigma05vsalpha}, we have plotted, with respect to $\alpha$ the slopes of the least-square fitting lines corresponding to the right-hand side of Figure \ref{f:fittingx05vst}, which we denote as $\sigma_{0.5}$; observe that the colors of the stars are in agreement with their corresponding curves in Figure \ref{f:fittingx05vst}. We have also plotted the curve $1/\alpha$, using a dashed-dotted black line. The results show that the agreement of $\sigma_{0.5}$ with respect to $1/\alpha$ improves, as $\alpha\to2^-$: on the one hand, when $\alpha = 0.5$, $\sigma_{0.5} = 1.9346$, and $1 / 0.5 = 2$; on the other hand, when $\alpha = 1.95$, $\sigma_{0.5} = 0.51277$, and $1 / 1.95 = 0.51282$. Therefore, the numerical experiments seem to suggest that
$$
x_{0.5}(t) \sim e^{\sigma_{0.5}t} \sim e^{t / \alpha} \Longrightarrow c(t) \approx x_{0.5}'(t) \sim e^{t / \alpha},
$$

\noindent which is in good agreement with \cite{CabreRoquejoffre2013}, because, from \eqref{sourceterm1}, $f'(0) = 1$.

\begin{figure}[!htbp]
	\begin{center}
		\includegraphics[width=0.5\textwidth, clip=true, align=b]{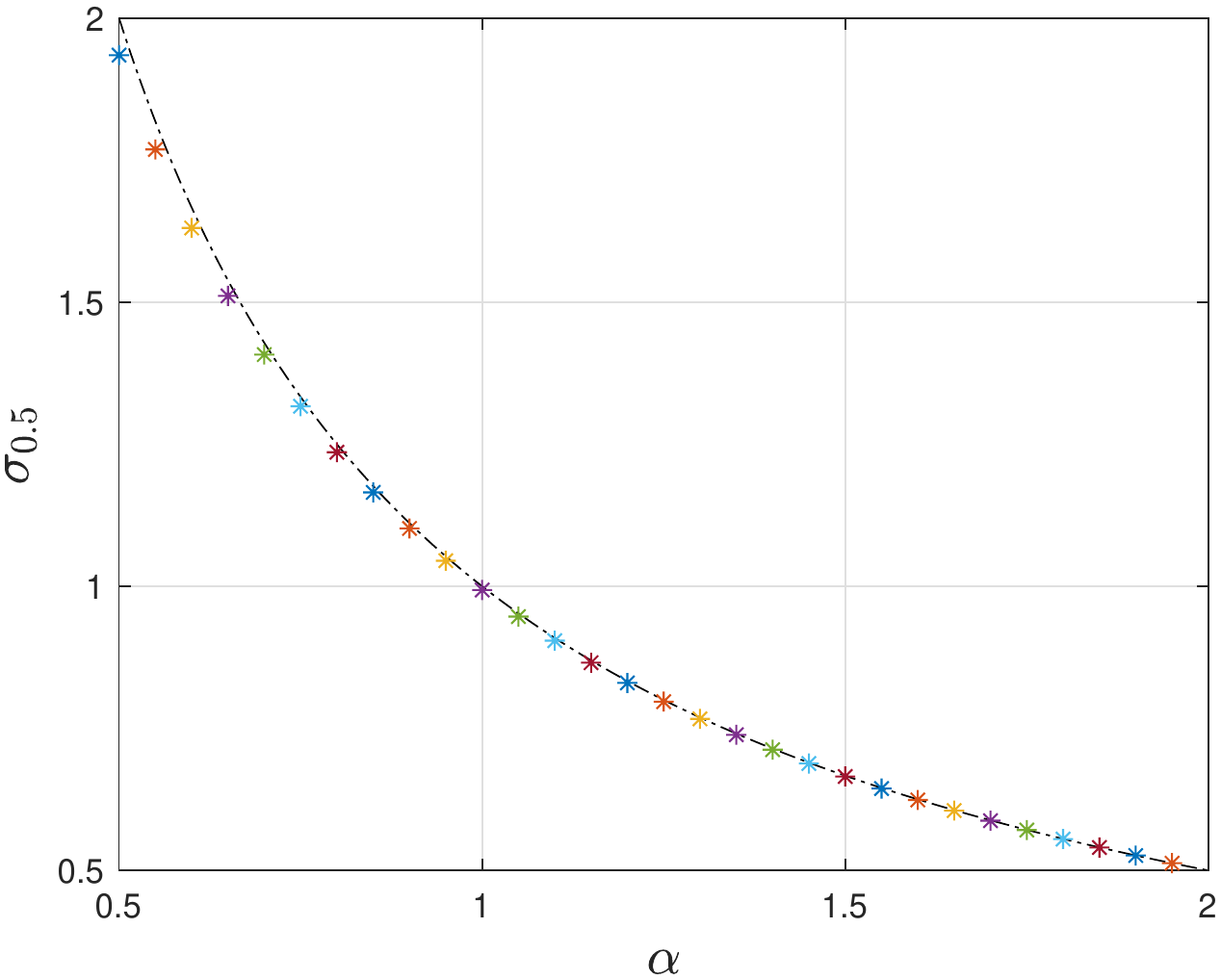}
		\caption{Slopes of the least-square fitting lines, as obtained in the right-hand side of Figure \ref{f:fittingx05vst}; the colors of the stars are in agreement with their corresponding curves in Figure \ref{f:fittingx05vst}. The dashed-dotted black curve is the plot of $1/\alpha$.}\label{f:fittingsigma05vsalpha}
	\end{center}
\end{figure}

In order to see whether the results for $\alpha = 0.5$ can be improved, we have repeated the simulations for that case, taking $L = 10000$, $\Delta t = 0.01$, $N = 8192$. Even if, at first sight, these values could be deemed excessive, they are not, because we are able to reach $t = 9$, instant at which $x_{0.5}(9)$ is greater than $10^7$. Indeed, in order to capture accurately the exponential behavior, it is convenient to advance until times as large as possible. On the left-hand side of Figure \ref{fitunacurvaalpha}, we have plotted $x_{0.5}(t)$, for $t\in[0,9]$; on the right-hand side, $\ln(x_{0.5}(t))$, for $t\in[5,9]$, obtaining again an almost perfect linear fitting. Furthermore, in this case, $\sigma_{0.5} = 1.9865$, which is remarkably closer to the predicted value $1/0.5=2$ than in Figure \ref{f:fittingsigma05vsalpha}. Therefore, in order to approximate accurately $\sigma_{0.5}$ for values of $\alpha$ smaller than $0.5$, it will be convenient to take even larger values of $N$ and $L$.

\begin{figure}[!htbp]
	\begin{center}
		\includegraphics[width=0.5\textwidth, clip=true, align=b]{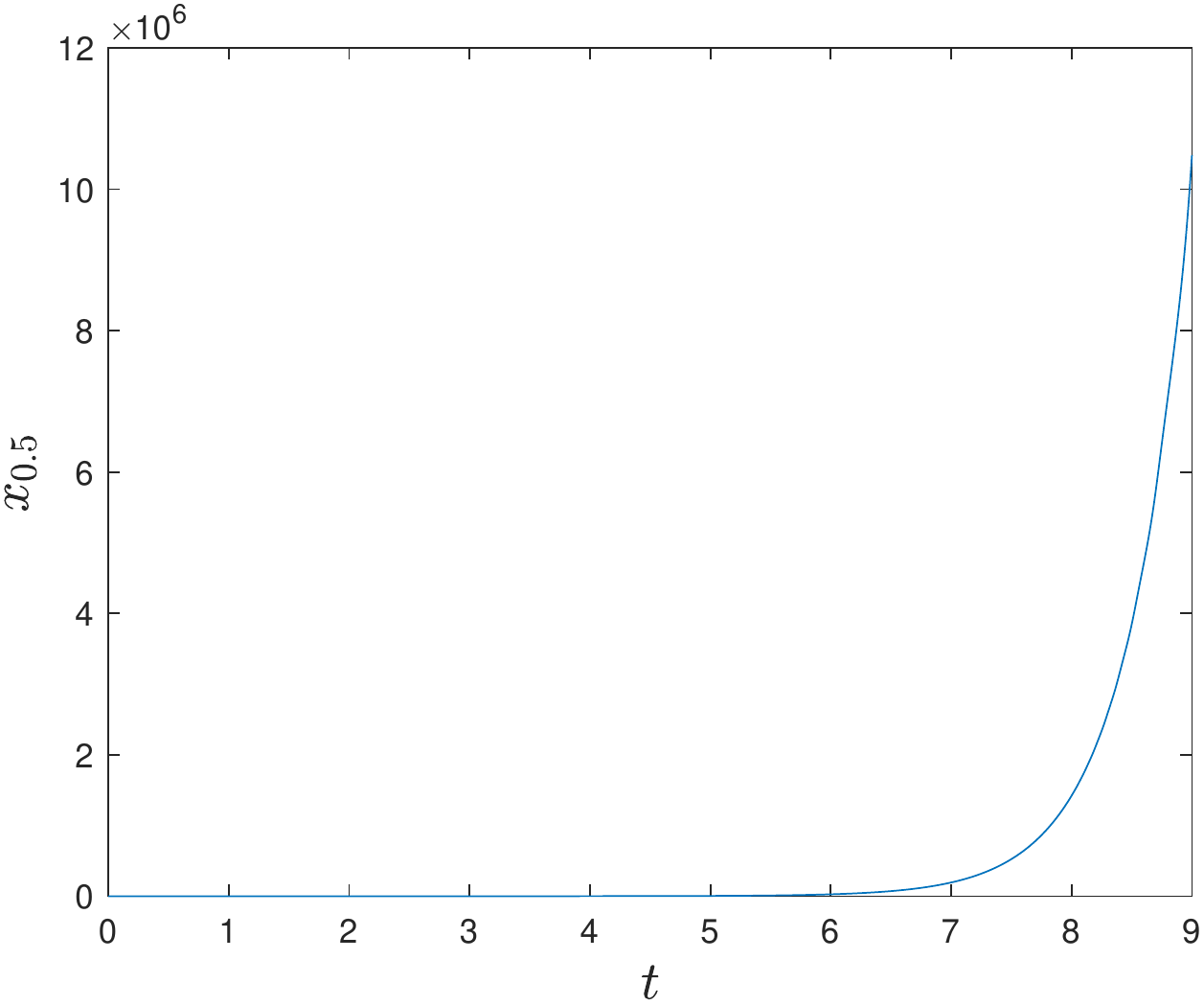}\includegraphics[width=0.5\textwidth, clip=true, align=b]{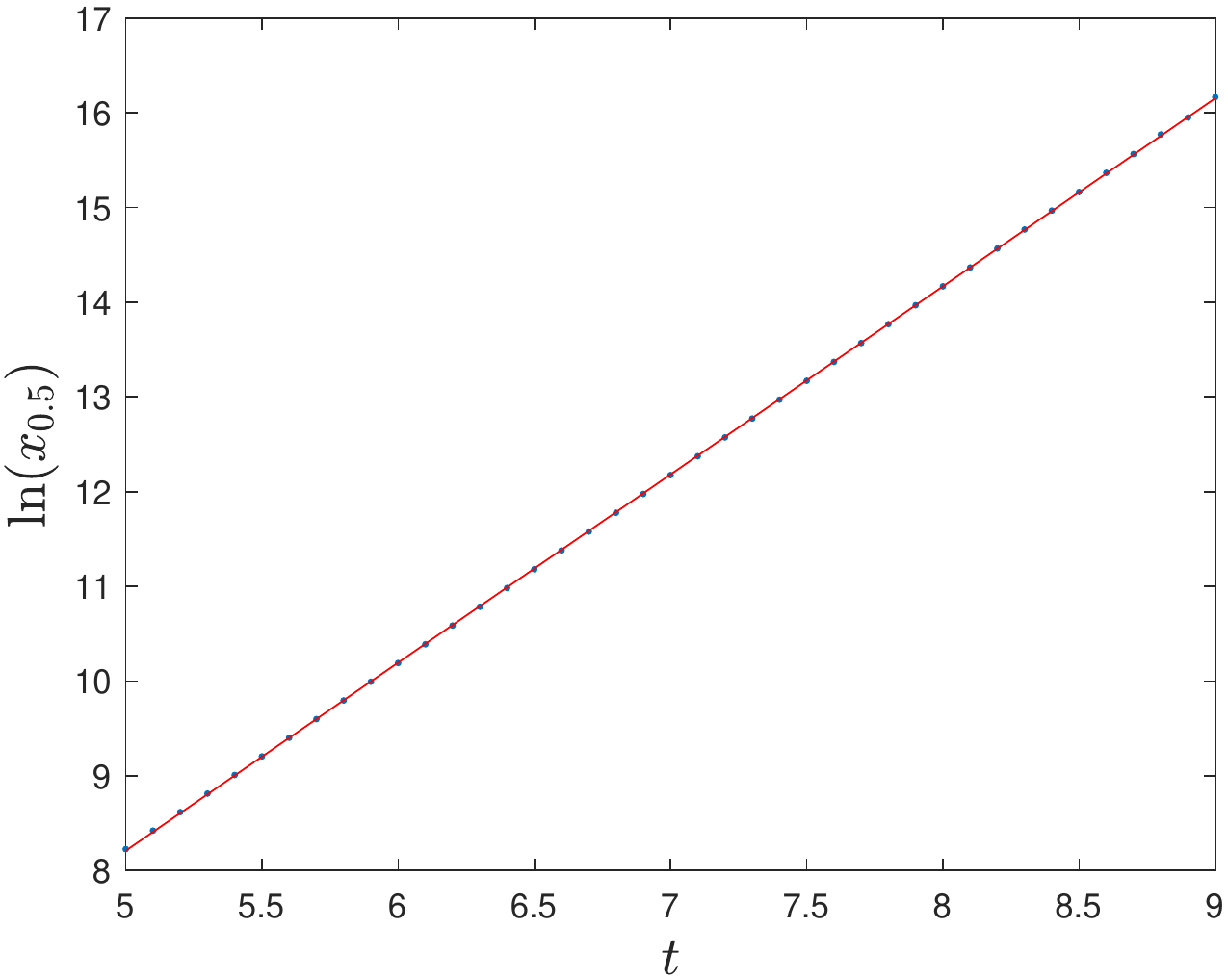}
		\caption{$\alpha=0.5$, $L=10^4$, $\Delta t = 5\cdot10^{-3}$, and $N=8192$. Left: $x_{0.5}(t)$ against $t\in[0,9]$. Right: $\ln(x_{0.5}(t))$ against $t\in[5,9]$, and the corresponding least-square fitting line.}\label{fitunacurvaalpha}
	\end{center}
\end{figure}
	
\section*{Acknowledgments}
The authors acknowledge the financial support of the Spanish Government through the MICINNU projects MTM2014-53145-P and PGC2018-094522-B-I00, and of the Basque Government through the Research Group grants IT641-13 and IT1247-19. J. Cayama also acknowledges the support of the Spanish Government through the grant BES-2015-071231.

\end{document}